\newtheorem{theorem}{Theorem}[section]
\newtheorem{algorithm}[theorem]{Algorithm}
\newtheorem{corollary}[theorem]{Corollary}
\newtheorem{definition}[theorem]{Definition}
\newtheorem{lemma}[theorem]{Lemma}
\newtheorem{proposition}[theorem]{Proposition}
\DeclareMathAlphabet{\mathpzc}{OT1}{pzc}{m}{it}
\newcommand{\Supp}[2]{\operatorname{\mathpzc{Supp}}_{#1}(#2)}
\newcommand{\N}[1]{\operatorname{\mathpzc{N}}(#1)}
\newcommand{\Core}[1]{\operatorname{\mathpzc{Core}}(#1)}
\newcommand{\ConnE}[1]{\operatorname{\mathpzc{Conn-E}}(#1)}
\newcommand{\supp}[2]{\operatorname{supp}_{#1}(#2)}
\newcommand{\nulidad}[1]{\operatorname{null}(#1)}
\newcommand{\rank}[1]{\operatorname{rk}(#1)}
\newcommand{\core}[1]{\operatorname{core}(#1)}
\newcommand{\sgn}[1]{\operatorname{sgn}(#1)}
\newcommand{\down}[3]{{#1}\!\downharpoonleft_{\scalebox{0.5}{#3}}^{\scalebox{0.5}{#2}}}
\newcommand{\up}[3]{{#1}\!\upharpoonleft_{\scalebox{0.5}{#3}}^{\scalebox{0.5}{#2}}}
\begin{document}

	\title{Null Decomposition of Trees}
	\date{May 8, 2017}
	\author{Daniel A. Jaume\footnote{Corresponding author, e-mail: djaume@unsl.edu.ar}, Gonzalo Molina \footnote{Universidad Nacional de San Luis. Argentina}}
	\maketitle

\begin{abstract}
	Let \(T\) be a tree, we show that the null space of the adjacency matrix of \(T\) has relevant information about the structure of \(T\). We introduce the Null Decomposition of trees, and use it in order to get formulas for independence number and matching number of a tree. We also prove that the number of maximum matchings in a tree is related to the null decomposition.
\end{abstract}

%
%
%
%
%
%
%
%


\section{Introduction}

The Eigenspaces of graphs have been studied for many years. The standard references in the topic is \cite{cvetkovic1997eigenspaces}. Fiedler  (1975) was the first in studying graph structure with eigenvectors, see \cite{fiedler1975eigenvectors}. In 1988, Power used eigenvectors to study the connection structure of graphs, see \cite{powers1988graph}. The null space has been studied for many classes of graphs (paths, trees, cycles, circulant graphs, hypercubes, etc.). But, compared to the amount of research on spectral graph theory, the study of the eigenvector of graphs has received little attention.
 
The nullity of a tree can be given in an explicit form in terms of the matching number of the tree. In 2005, Fiorini, Gutman, and Sciriha, see \cite{fiorini2005trees}, proved that among all the \(n\)-vertex trees whose vertex degree do not exceed a certain value \(D\), the greatest nullity is \(n-2 \lceil \frac{n-1}{D}\rceil\). They also gave methods for constructing trees with maximum nullity. The work of Fiorini, Gutman and Sciriha is based on the fact that for any tree \(T\) holds \(\nulidad{T}=v(T)-2\nu(T)\), where \(\nu(T)\) is the matching number. This is another consequence of the well-known fact that for trees the characteristic and the matching polynomials are equal.

Sander and Sander (2009) work with ideas similar to ours, see \cite{Sander2009133}, but with different aims. They present a very interesting composition-decomposi-tion technique that correlates tree eigenvectors with certain eigenvectors of an associated skeleton forest (via some contractions). They use the matching properties of a skeleton in order to determine the multiplicity of the corresponding tree eigenvalue. Their results allow them characterizing the tree that admit eigenspaces bases with consisting of vectors whose entries come from \(\{-1,0,1\}\).

The purpose of this study is to determine which information about a trees could be obtained from the support of null space of  its adjacency matrix. We will introduce a new family of trees, the S-trees, which are based on the non-zero entries of vectors in null space. We will show that every tree can be decomposed into a forest of S-trees and a forest of non-singular trees.  

Our work can be seen how a further step (in a narrow sense) of the work of Nylen (\cite{nylen1998null}), and (part of)  work of Neumaier (specifically, section 3 of \cite{neumaier1982second}); even though we were not aware of this former paper before finishing the present work. The null decomposition of trees allow us to note that Theorem 3.4 (ii) and Proposition 3.6 (ii)-(v) in \cite{neumaier1982second} are not correct.

Now we describe as the paper is organized. In Section 2 we set up notation and terminology, and also review some of the standard facts on graphs. Section 3 is concerned with the notion of support of vectors associated to graphs. Section 4 defines and studies S-trees. In Section 5 we state and prove our main result: the null decomposition of trees. We use it in order to obtain formulas for independence number and matching number of a tree. We also prove that the number of maximum matchings in a tree depends on its null decomposition.

\section{Basics and notation}
The material in this section is standard. We recommend that the reader starts reading from Section 3, and comes back to Section 2 only to clear any notation doubts.

As usual in combinatorics, \([k]:=\{1,\cdots,k\}\). In this work we will only consider finite, loopless, simple graphs. Let \(G\) be  a graph:
\begin{enumerate}
	\item \(V(G)\) is the set of vertices of \(G\), and \(v(G):=|V(G)|\) denote its cardinality. An \(n\)-graph is a graph of order \(n\).
	\item For any \(S \subset V(G)\), the subgraph induced by \(S\) in \(G\) is denoted by \(G\langle S \rangle\). 
	\item \(E(G)\) is the set of edges of \(G\), and \(e(G):=|E(G)|\) its size.
	\item Let \(u \sim v\) denote that two vertices \(u\) and \(v\) of \(G\) are neighbors: \(\{u,v\} \in E(G)\).
	\item Let \(N_{G}(v)\) denote set of neighbors of \(v\) in \(G\), if \(G\) is clear from the context we just write \(N(v)\). The closed neighborhood of \(v\) is \(N[v]=N(v)\cup\{v\}\). For \(S \subset V(G) \) the closed neighborhood of \(S\) is
	\[
	N\left[ S \right]:= \bigcup_{u \in S}N\left[u\right]
	\]
	\item Let \(\deg(v)\) denote the degree of \(v\), the cardinality of \(N(v)\).
	\item A vertex \(v\) of \(G\) is a pendant vertex if \(\deg(v)=1\).
	\item Let \(u,v \in V(G)\), with \(G+\{u,v\}\) we denote the graph obtained by add the edge \(\{u,v\}\) to \(E(G)\).
	\item Let \(e \in E(G)\), with \(G-e\) we denote the graph obtained by remove the edge \(e\) from \(G\), thus \(E(G-e)=E(G) \setminus \{e\}\).
	\item With \(\mathbb{R}^{G}\) we denote the vector space of all functions from \(V(G)\) to \(\mathbb{R}\), the set of real numbers. Let \(x \in \mathbb{R}\), and \(v \in V(G)\), we usually write \(x_{v}\) instead of \(x(v)\).
	\item Let \(\theta\) denote the zero vector of a given vector space.
	\item \(A(G)\) is the adjacency matrix of \(G\), if \(G\) is clear from the context we drop \(G\) and just write \(A\).
	\item The rank of \(G\) is the rank of its adjacency matrix: \(\rank{G}:=\rank{A(G)}\). Given a matrix \(A\), its transpose will be denoted by \(A^t\).
	\item The null space of \(G\) is the null space of its adjacency matrix: \(\N{G}:=\N{A(G)}\).
	\item The nullity of \(G\) is the nullity of its adjacency matrix: \(\nulidad{G}:=\nulidad{A(G)}\).
	\item The spectrum of \(G\) is the set of different eigenvalues of \(A(G)\), and it will be denoted by \(\sigma (G)\). Given an eigenvalue \(\lambda \in \sigma (G) \) the eigenspace associate to \(\lambda\), denoted by \(\mathcal{E}_{\lambda}(G)\), will be called \(\lambda\)-eigenspace of \(G\).
	\item Let \(G\) be a graph of order \(n\), and let \(x\) be a vector of \(\mathbb{R}^{n}\). For each vertex \(u \in V(G)\) its \(x\)-neighborhood-weight is
	\[\omega_{x}(u):=\sum_{v \sim u} x_{v}\]
	where \(x_{v}\) is the coordinate of \(x\) associated to the vertex \(v\).
	\item A set \(S\) of vertices of a graph \(G\) is an independent set in \(G\) if no two vertices in \(S\) are adjacent. \(\alpha(G)\) denote the independence number of \(G\), the cardinality of a maximum independent set in \(G\).
	\item A matching \(M\) in  \(G\) is a set of pairwise non-adjacent edges; that is, no two edges in \(M\) share a common vertex. A vertex is saturated (by \(M\)), if it is an endpoint of one of the edges in the matching \(M\). Otherwise the vertex is non-saturated. \(\nu(T)\) is the matching number of \(T\): cardinality of a maximum matching. The set of all maximum matchings of \(G\) is denoted by \(\mathcal{M}(G)\), the number of maximum matchings in \(G\) is \(m(G)=|\mathcal{M}(G)|\). The Edmond-Gallai vertices of \(G\), denoted \(EG(G)\), is the set of all vertices of \(G\) non-saturated by some maximum matching \(M\) in \(G\).
	\item A vertex cover of \(G\) is a set of vertices such that each edge of \(G\) is incident to at least one vertex of the set. The vertex cover number, denoted by \(\tau(G)\), is the size of a minimum vertex cover in \(G\).
	\item A set \(S \subset V(G) \) is a dominating set of \(G\) if each vertex in \(V(G)\) is either in \(S\) or is adjacent to a vertex in \(S\). The domination number \(\gamma (G)\) is the minimun cardinality of a dominating set of \(G\).
	\item A graph \(G\) is bipartite if its vertices can be partitioned in two sets in such a way that no edge join two vertices in the same set.
\end{enumerate}
\begin{theorem}[K\"{o}nig-Egerv\'{a}ry]
	In any bipartite graph \(G\), the number of edges in a maximum matching equals the number of vertices in a minimum vertex cover: \(\nu(G)=\tau(G)\).
\end{theorem}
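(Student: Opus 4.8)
The plan is to establish the two inequalities $\nu(G)\le\tau(G)$ and $\tau(G)\le\nu(G)$ separately; only the second will use that $G$ is bipartite. The first holds for every graph: if $M$ is any matching and $C$ any vertex cover, then $C$ must contain at least one endpoint of each edge of $M$, and since the edges of $M$ are pairwise non-adjacent these chosen endpoints are distinct, whence $|M|\le|C|$. Taking $M$ a maximum matching and $C$ a minimum vertex cover gives $\nu(G)\le\tau(G)$.

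For the reverse inequality I would use the classical alternating-path construction. Fix a bipartition $V(G)=X\cup Y$ and a maximum matching $M$, let $U\subset X$ be the set of $M$-unsaturated vertices of $X$, and let $Z\subset V(G)$ be the set of all vertices reachable from $U$ by an $M$-alternating path (one starting with a non-matching edge and alternating thereafter). Put $S:=Z\cap X$, $T:=Z\cap Y$, and $K:=(X\setminus S)\cup T$. I would then show that $K$ is a vertex cover with $|K|=|M|$, which yields $\tau(G)\le|K|=|M|=\nu(G)$ and, together with the previous paragraph, closes the argument.

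The verification breaks into three steps. (i) Every vertex of $T$ is $M$-saturated, for otherwise an $M$-alternating path from $U$ to an $M$-unsaturated vertex of $Y$ would be $M$-augmenting, contradicting the maximality of $M$. (ii) $M$ induces a bijection between $T$ and $S\setminus U$: by (i) each $y\in T$ is matched to some $x\in X$, and extending by the edge $xy$ the alternating path reaching $y$ shows $x\in S$, while $x\notin U$ since $y$ saturates $x$; conversely, every $M$-saturated vertex of $S$, that is, every vertex of $S\setminus U$, is reached by a path ending in a matching edge, so its $M$-partner lies in $T$. Hence $|T|=|S|-|U|$, and therefore $|K|=|X|-|S|+|T|=|X|-|U|$, which is exactly the number of $M$-saturated vertices of $X$, namely $|M|$, since each edge of $M$ has a unique endpoint in $X$. (iii) $K$ is a vertex cover: given $xy\in E(G)$ with $x\in X$, $y\in Y$, if $x\notin S$ then $x\in X\setminus S\subset K$; and if $x\in S$ then $y\in Z$ — by extending an alternating path to $x$ by the edge $xy$ when $xy\notin M$, and because $x$ is then $M$-saturated (so $x\in S\setminus U$) with the path reaching $x$ ending in the matching edge $xy$ when $xy\in M$ — so $y\in Z\cap Y=T\subset K$.

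I expect step (ii) to be the delicate point: one must invoke the maximality of $M$ correctly through step (i) (essentially Berge's augmenting-path criterion) and track the $M$-induced bijection $T\leftrightarrow S\setminus U$ carefully, so as to obtain the exact equality $|K|=|M|$ rather than a mere inequality. An alternative would be to deduce the statement from the max-flow/min-cut theorem by orienting $G$ from $X$ to $Y$ with unit capacities and adjoining a source and a sink, but the alternating-path proof sketched above is self-contained and has the additional merit of being constructive (it is the core of the Hungarian algorithm).
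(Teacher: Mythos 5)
Your argument is correct: this is the classical alternating-path (Hungarian-algorithm) proof of the K\"{o}nig--Egerv\'{a}ry theorem, and all three verification steps go through as written --- in particular, $U\subseteq S$ via the trivial alternating path, so $|T|=|S|-|U|$ does give $|K|=|X|-|U|=|M|$. The paper states this theorem without proof, as standard background in its notation section, so there is nothing of the authors' to compare against; your write-up is a complete, self-contained justification of a result the paper merely cites.
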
	

The complement of a vertex cover in any graph is an independent set, thus the complement of  a minimum vertex cover is a maximum independent set. Hence
\[
\alpha(G)+\tau(G)=v(G)
\]
The independence number \(\alpha(G)\) of a graph  \(G\) and its domination number \(\gamma(G)\) are related by
\[
\gamma(G) \leq \alpha(G)
\]




\section{Supports}


\begin{definition}
Let \(x\) be a vector of \(\mathbb{R}^{n}\), the \textbf{support} of \(x\) is 
\[
\Supp{\mathbb{R}^{n}}{x}:= \lbrace v \in [n] : x_{v} \neq 0 \rbrace 
\]
Let \(S\) be a subset of \(\mathbb{R}^n\). Then the support of \(S\) is
\[
\Supp{\mathbb{R}^{n}}{S}:= \bigcup_{x \in S}  \Supp{\mathbb{R}^{n}}{x}
\]
The cardinality of support of \(x\) is denoted by \(\supp{\mathbb{R}^{n}}{x}:=|\Supp{\mathbb{R}^{n}}{x}| \).
\end{definition}

For example, consider the following set of vectors of \(\mathbb{R}^{6}\):
\[
S= \{(0,1,0,-1,0,0)^{t},(0,0,1,-1,0,0)^{t}\}
\]
Then \(\Supp{\mathbb{R}^{6}}{S} = \{2,3,4\}\), and \(\supp{\mathbb{R}^{6}}{S} = 3\).

%

%



\medskip

Note that there is a vector in a given subspace that has non-zero entry at a given coordinate if and only if in some (or, every)  basis of the subspace there is a vector in the basis that has a non-zero at the same coordinate.

\begin{lemma} \label{SS1}
	Given a subspace \(S\) of \(\mathbb{R}^{n}\), let \(\mathcal{B} \) a basis of \(S\), then 
	\[
	\Supp{\mathbb{R}^{n}}{S }= \Supp{\mathbb{R}^{n}}{\mathcal{B}}
	\]
\end{lemma}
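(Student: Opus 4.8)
The plan is to prove the two inclusions $\Supp{\mathbb{R}^{n}}{\mathcal{B}} \subseteq \Supp{\mathbb{R}^{n}}{S}$ and $\Supp{\mathbb{R}^{n}}{S} \subseteq \Supp{\mathbb{R}^{n}}{\mathcal{B}}$ separately. The first inclusion is immediate: since $\mathcal{B} \subseteq S$, taking unions of supports over the smaller set yields a subset of the union over the larger set, so $\Supp{\mathbb{R}^{n}}{\mathcal{B}} \subseteq \Supp{\mathbb{R}^{n}}{S}$ directly from the definition of the support of a set.

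For the reverse inclusion, I would take an arbitrary coordinate $v \in \Supp{\mathbb{R}^{n}}{S}$ and show $v \in \Supp{\mathbb{R}^{n}}{\mathcal{B}}$. By definition there is some $x \in S$ with $x_v \neq 0$. Writing $\mathcal{B} = \{b_1, \dots, b_k\}$ and expanding $x = \sum_{i=1}^{k} c_i b_i$ in this basis, the $v$-th coordinate gives $x_v = \sum_{i=1}^{k} c_i (b_i)_v$. Since $x_v \neq 0$, at least one summand is nonzero, so there exists an index $i$ with $c_i (b_i)_v \neq 0$, and in particular $(b_i)_v \neq 0$. Hence $v \in \Supp{\mathbb{R}^{n}}{b_i} \subseteq \Supp{\mathbb{R}^{n}}{\mathcal{B}}$, which completes the argument.

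There is essentially no hard step here; the statement is a routine unwinding of the definitions of linear span and support, and the remark preceding the lemma already flags the key observation (a vector in the subspace with nonzero $v$-th entry forces some basis vector to have nonzero $v$-th entry). The only point worth stating carefully is the contrapositive-style reasoning in the finite sum: if every basis vector vanished at coordinate $v$, then every vector in the span would too, contradicting $x_v \neq 0$. I would present the proof as these two short inclusions and keep it to a few lines.
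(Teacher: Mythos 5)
Your proof is correct and is exactly the routine argument the paper has in mind: the paper states Lemma \ref{SS1} without proof, relying on the preceding remark that a nonzero entry at a coordinate in some vector of the subspace forces a nonzero entry at that coordinate in some basis vector, which is precisely your basis-expansion step. Both inclusions are handled properly and nothing is missing.
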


	
	%
%
%

	\begin{lemma} \label{SS2}
		Let \(W \) be a subspace of \(\mathbb{R}^{n}\), and let \(S \subset \Supp{\mathbb{R}^{n}}{W}\), then there exist  \(z(S) \in W\), such that \(S \subset \Supp{\mathbb{R}^{n}}{z(S)}\).
	\end{lemma}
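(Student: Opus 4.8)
The plan is to prove that from finitely many vectors in $W$ whose supports cover $S$, one can build a single vector in $W$ supporting all of $S$, by taking a generic linear combination. First I would observe that, since $S \subset \Supp{\mathbb{R}^{n}}{W}$, for each coordinate $i \in S$ there is some $w^{(i)} \in W$ with $w^{(i)}_i \neq 0$; as $S$ is finite this gives a finite list $w^{(1)}, \dots, w^{(k)}$ of vectors of $W$ such that $S \subset \bigcup_{j=1}^{k} \Supp{\mathbb{R}^{n}}{w^{(j)}}$. (One could instead invoke Lemma~\ref{SS1} and take a basis of $W$, which is also finite, but the ad hoc list suffices.) It then remains to show that a suitable combination $z = \sum_{j=1}^{k} c_j w^{(j)}$ has $z_i \neq 0$ for every $i \in S$.

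The key step is a genericity argument. For each fixed $i \in S$, the map $(c_1,\dots,c_k) \mapsto \sum_{j} c_j w^{(j)}_i$ is a nonzero linear functional on $\mathbb{R}^{k}$ (nonzero because at least one $w^{(j)}_i \neq 0$), so its zero set $H_i$ is a proper linear subspace, hence a set of measure zero, equivalently a nowhere dense closed subset of $\mathbb{R}^{k}$. Since $S$ is finite, the union $\bigcup_{i \in S} H_i$ is still a proper subset of $\mathbb{R}^{k}$ (a finite union of proper subspaces cannot be all of $\mathbb{R}^{k}$ — over an infinite field this is standard, and here one can cite measure or Baire category). Pick any $(c_1,\dots,c_k)$ outside this union and set $z(S) := \sum_{j=1}^{k} c_j w^{(j)}$. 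Then $z(S) \in W$ since $W$ is a subspace, and $z(S)_i \neq 0$ for all $i \in S$, i.e. $S \subset \Supp{\mathbb{R}^{n}}{z(S)}$, as required.

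I expect the only real obstacle to be the elementary lemma that $\mathbb{R}^{k}$ is not a finite union of proper subspaces; this is where the hypothesis that the ground field is infinite (here $\mathbb{R}$) is essential, and it is worth stating explicitly rather than waving at it. Everything else — finiteness of $S$, closure of $W$ under linear combinations, and the fact that each $w^{(j)}_i \mapsto$ coordinate map is linear — is routine. An alternative, more constructive route avoids genericity entirely: order $S = \{i_1, \dots, i_m\}$ and build $z$ inductively, at stage $t$ replacing the current vector $z^{(t-1)}$ by $z^{(t-1)} + \varepsilon\, w^{(i_t)}$ for a sufficiently small nonzero $\varepsilon$, chosen so that the finitely many coordinates that were already nonzero stay nonzero while coordinate $i_t$ becomes nonzero; this also works and sidesteps the "finite union of subspaces" fact, at the cost of a slightly fussier induction. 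I would present the genericity version as the main argument and perhaps remark on the inductive alternative.
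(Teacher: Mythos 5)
Your proof is correct, but your main argument is genuinely different from the paper's. The paper proceeds constructively: having chosen, for each $i\in S$, a vector $x(i)\in W$ with $x(i)_i\neq 0$, it accumulates them one at a time, at each step rescaling the current vector so that all its entries have absolute value at most $1$ and rescaling the incoming vector so that all its \emph{nonzero} entries have absolute value at least $2$; the sum then cannot cancel on any coordinate where either summand is nonzero, so the support of the accumulated vector is the union of the supports seen so far. This is essentially the ``fussier induction'' you sketch as an alternative at the end (with an explicit, uniform choice of $\varepsilon$ rather than a step-by-step smallness condition). Your primary route instead takes a generic linear combination $\sum_j c_j w^{(j)}$ and observes that the bad coefficient tuples form a finite union of hyperplanes $H_i\subset\mathbb{R}^k$, which cannot exhaust $\mathbb{R}^k$ over an infinite field. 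Both arguments are valid and of comparable length. The genericity version is cleaner and generalizes immediately to any infinite field (and in fact shows the generic element of the span works, which is morally the content of Nylen's Lemma~7 quoted right after); the paper's scaling argument is more elementary, fully explicit, and avoids invoking the finite-union-of-subspaces fact, at the cost of being tied to the ordered field structure of $\mathbb{R}$. You correctly identify the one non-routine ingredient of your approach (that $\mathbb{R}^k$ is not a finite union of proper subspaces) and that it must be stated rather than waved at; with that supplied, your proof is complete.
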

	\begin{proof}
		Let \(S=\{i_{1}, \dots,i_{h}\} \subset [n]\). For each \(i_{j} \in S\) there exists a vector \(x(j) \in W\) with non-zero \(j\)-coordinate: \(x(j)_{j} \neq 0\). The following algorithm give a vector \(z(S)\):
		\begin{algorithm}
			INPUT List of vectors \(\{x(1),\dots,x(h)\}\).
			\begin{enumerate}
				\item \(z_{1}=x(1)\).
				\item FOR \(t=2\) TO \(h\):
					\begin{enumerate}
						\item \(\alpha= \max_{1 \leq k \leq n} |x(t-1)_{k}|\).
						\item \(\beta = \frac{1}{2}\min \{|x(t)_{k}|:x(t)_{k}  \neq 0, \text{for } 1 \leq k \leq n \} \).
						\item \(z_{t}=\frac{1}{\alpha} x(t-1) + \frac{1}{\beta} x(t)\).
					\end{enumerate}
			\end{enumerate}
		OUTPUT \(z(S)=z(h)\)
		\end{algorithm}
	Clearly \(\Supp{\mathbb{R}^{n}}{z(t)}=\Supp{\mathbb{R}^{n}}{z(t-1)} \cup \Supp{\mathbb{R}^{n}}{x(t)}\), for \(2 \leq t \leq h\). Hence \(S \subset \bigcup_{t=1}^{h}\Supp{\mathbb{R}^{n}}{x(t)}= \Supp{\mathbb{R}^{n}}{z(S)}\).
	\end{proof}
	Nylen gave the following related result.
	\begin{lemma}[Lemma 7, \cite{nylen1998null}]
		Let \(W\) be a subspace of \(\mathbb{R}^{n}\) with dimension \(d>0\). Then there exists a basis \(\{w_{1},\dots,w_{d}\}\) of \(W\) satisfying 
		\[
		\Supp{\mathbb{R}^{n}}{w_{i}}=\Supp{\mathbb{R}^{n}}{W}
		\]
		for all \(i=1, \dots, d\). 
	\end{lemma}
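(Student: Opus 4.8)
The plan is to build the desired basis greedily, keeping every vector we choose inside the set of vectors of $W$ whose support is the whole of $\Supp{\mathbb{R}^{n}}{W}$. The two ingredients are Lemma \ref{SS2} and the standard fact that a vector space over an infinite field is never the union of finitely many proper subspaces.

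First I would record the trivial observation that $\Supp{\mathbb{R}^{n}}{w} \subseteq \Supp{\mathbb{R}^{n}}{W}$ for every $w \in W$, so that having support equal to $\Supp{\mathbb{R}^{n}}{W}$ is the same as having ``full support''. Write $I := \Supp{\mathbb{R}^{n}}{W}$ and, for each $k \in I$, set $W_{k} := \{w \in W : w_{k}=0\}$. Applying Lemma \ref{SS2} with $S=I$ produces a vector $z \in W$ with $I \subseteq \Supp{\mathbb{R}^{n}}{z}$, hence (since $z\in W$) with $\Supp{\mathbb{R}^{n}}{z} = I$. The role of $z$ is to witness that each $W_{k}$ is a \emph{proper} subspace of $W$ (because $z_{k}\neq 0$), and $I\neq\emptyset$ since $d>0$. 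Note finally that a vector $w\in W$ has full support if and only if $w \notin \bigcup_{k\in I} W_{k}$.

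Then I would construct $w_{1},\dots,w_{d}$ one at a time, maintaining linear independence: having produced independent $w_{1},\dots,w_{j}$ of full support with $j<d$, the finite family $\{W_{k} : k\in I\}\cup\{\operatorname{span}(w_{1},\dots,w_{j})\}$ consists of proper subspaces of $W$ (the span is proper because $j<d$), so $W$ is not their union and one can pick $w_{j+1}$ lying outside every member; such a $w_{j+1}$ automatically has full support and is independent of $w_{1},\dots,w_{j}$. After $d$ steps $\{w_{1},\dots,w_{d}\}$ is a basis of $W$ all of whose members have support $\Supp{\mathbb{R}^{n}}{W}$. The only mildly delicate point is this inductive choice, where one must simultaneously dodge the coordinate subspaces $W_{k}$ and the running span — but that is precisely what the no-finite-cover fact over $\mathbb{R}$ supplies, so there is no serious obstacle. (If one prefers to avoid that fact, an equally short route is: start from any basis $\{v_{1},\dots,v_{d}\}$, write $z=\sum_{j}a_{j}v_{j}$ with $a=(a_{1},\dots,a_{d})^{t}\neq\theta$, and set $w_{i}:=v_{i}+c_{i}z$; then $\Supp{\mathbb{R}^{n}}{w_{i}}=I$ for all but finitely many $c_{i}$, while $\{w_{1},\dots,w_{d}\}$ remains a basis as long as $1+\sum_{j}a_{j}c_{j}\neq 0$, by the rank-one determinant identity $\det(I+c\,a^{t})=1+a^{t}c$, and such scalars $c_{i}$ exist since the forbidden set is a finite union of affine hyperplanes in $\mathbb{R}^{d}$.)
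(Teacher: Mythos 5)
Your argument is correct. Note, however, that the paper does not actually prove this statement: it is quoted verbatim as Lemma~7 of Nylen's paper and cited without proof, the paper's own contribution in this section being the weaker Lemma~\ref{SS2} (existence of a single vector whose support contains a prescribed subset of \(\Supp{\mathbb{R}^{n}}{W}\)). So there is no in-paper proof to compare against, and your write-up fills a genuine gap. Your main route is sound: each \(W_{k}=\{w\in W: w_{k}=0\}\) for \(k\in\Supp{\mathbb{R}^{n}}{W}\) is a proper subspace (witnessed either by the definition of the support of \(W\) or, as you do, by the full-support vector \(z\) from Lemma~\ref{SS2}), the running span is proper while \(j<d\), and a vector space over an infinite field is never a finite union of proper subspaces, so the greedy step never stalls; full support and independence of the chosen vector then both follow, and the inclusion \(\Supp{\mathbb{R}^{n}}{w}\subseteq\Supp{\mathbb{R}^{n}}{W}\) upgrades ``contains'' to ``equals''. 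Your parenthetical alternative (perturb an arbitrary basis by multiples of a full-support vector \(z\), with the determinant identity \(\det(I+ca^{t})=1+a^{t}c\) guaranteeing the perturbed set is still a basis for all \(c\) off a finite union of affine hyperplanes) is also correct, and is closer in spirit to the explicit, algorithmic style the paper uses to prove Lemma~\ref{SS2}; it is essentially the argument one finds in Nylen's original paper. Either version is acceptable; the only cosmetic remark is that the auxiliary vector \(z\) is not strictly needed in the first route, since the properness of each \(W_{k}\) is immediate from the definition of \(\Supp{\mathbb{R}^{n}}{W}\).
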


	Our main interest lies on which properties of a tree are associated to the support of its null. For this reason, we talk about supports of graphs. Given a graph \(G\), and \(x \in \mathbb{R}^{G}\)
	\[
	\Supp{G}{x}:=\{v \in V(G)\; : \; x_{v} \neq 0\}
	\]
	
%
	%
	
		The adjacency matrix of the tree in Figure \ref{Fig1} is:
		\[
		A(T)=
		\left[
		{ 
			\begin{array}{cccccc}
			0 & 1 & 1 & 1 & 1 & 0 \\
			1 & 0 & 0 & 0 & 0 & 0 \\
			1 & 0 & 0 & 0 & 0 & 0 \\
			1 & 0 & 0 & 0 & 0 & 0 \\
			1 & 0 & 0 & 0 & 0 & 1 \\
			0 & 0 & 0 & 0 & 1 & 0 
			\end{array}
		}
		\right]
		\]
		
		\tikzstyle{every node}=[circle, draw, fill=white!,
		inner sep=0.1pt, minimum width=14pt]
		
		\begin{figure}[h] 
			\centering
			
			\begin{tikzpicture}[thick,scale=0.2]%
			\draw 
			(0,0) node{1}
			(0,5) node{2} -- (0,0)
			(-5,0) node{3} -- (0,0)
			(0,-5) node{4} -- (0,0)
			(5,0) node{5} -- (0,0)
			(10,0) node{6} -- (5,0);
			
			\end{tikzpicture}
			\caption{Tree \(T\)}
			\label{Fig1}
		\end{figure}
	
		The null space of \(T\),  
		\(\N{T}\), is the linear space generated by 
		\[
		\{(0,1,0,-1,0,0)^{t},(0,0,1,-1,0,0)^{t}\}
		\]
		thus
		\[
		\Supp{T}{\N{T}}=\{2,3,4\}
		\]
		

		The next two lemmata will be needed in the following section where it will be proved that the null space of trees have structural information. We start with an easy observation about supports of null spaces of a graph \(G\): the neighbors of pendant vertices of a graph \(G\) are not in the support of the null space of \(G\), i.e., the corresponding entries of any 0-eigenvector of \(G\) are zero.

		\begin{lemma} \label{L1}
			Let \(G\) be a \(n\)-graph. If \(v\) is a pendant vertex of \(G\), and \(u\) its neighbor, then \(u \notin  \Supp{G}{\N{G}}\).
		\end{lemma}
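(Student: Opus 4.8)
The plan is to unwind the definition of the null space coordinate by coordinate. Fix an arbitrary vector $x \in \N{G}$; then $A(G)x = \theta$, and reading this equation off row by row says precisely that $\omega_{x}(w) = \sum_{z \sim w} x_{z} = 0$ for every vertex $w \in V(G)$. The key step is to apply this identity at the pendant vertex $v$ itself: since $\deg(v) = 1$ with $u$ the unique neighbour of $v$, the sum defining $\omega_{x}(v)$ collapses to the single term $x_{u}$, so $0 = \omega_{x}(v) = x_{u}$. Because $x$ was an arbitrary element of $\N{G}$, this forces $x_{u} = 0$ for every $x \in \N{G}$, which is exactly the statement that $u \notin \Supp{G}{\N{G}}$.

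There is no real obstacle here; the only point that needs care is the translation between the matrix equation $A(G)x = \theta$ and the neighbourhood-weight condition, i.e. noticing that the row of $A(G)$ indexed by the pendant vertex $v$ has a single nonzero entry (a $1$ in the column of $u$), so that the corresponding equation of the homogeneous linear system is literally $x_{u} = 0$. I would state and argue the lemma in this purely local form, without assuming $G$ is connected, since the reasoning only ever refers to $v$ and its neighbour $u$.

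One may also repackage the same computation in terms of bases: every basis vector of $\N{G}$ must satisfy the equation at $v$, hence has vanishing $u$-entry, and then Lemma \ref{SS1} gives $u \notin \Supp{G}{\N{G}}$ again; but the direct argument above is shorter and self-contained.
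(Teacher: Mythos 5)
Your proof is correct and follows essentially the same route as the paper: evaluate the equation $(A(G)x)_{v}=\omega_{x}(v)=0$ at the pendant vertex $v$, where the sum collapses to the single term $x_{u}$, forcing $x_{u}=0$ for every $x\in\N{G}$. No gaps.
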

		
		\begin{proof}
			If \(  \Supp{G}{\N{G}} = \emptyset\) there is nothing to prove. Let \(x\) be a 0-eigenvector of \(A(G)\). Then, as \(x_{u}=\omega_{x}(v)=\sum_{w \sim v} x_{w} =0 \cdot x_{v}=0\), it follows that \(u \notin  \Supp{G}{\N{G}}\).
		\end{proof}

		Now we turn our attention to null space of trees. Throughout the rest of the work, \(\Supp{}{T} \) stands for \(\Supp{T}{\N{T}}\). Similarly, \(\supp{}{T}\) stands for \(\supp{T}{\N{T}}\).
		
		The next lemma states the impossibility that two supported vertices of \( \Supp{}{T}\) are neighbors. This result is well know (see \cite{neumaier1982second}, pag. 18). But we like our proof.
		
		\begin{lemma} \label{Neighboors of supported}
			Let \(T\) be a tree, and \(v \in  \Supp{}{T}\), then \(N(v) \cap \Supp{}{T} = \emptyset\).
		\end{lemma}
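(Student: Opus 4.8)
The plan is to argue by contradiction and to first reduce the statement to one about a single null vector. Suppose $v \in \Supp{}{T}$ has a neighbor $u \in \Supp{}{T}$. Applying Lemma~\ref{SS2} with $W = \N{T}$ and $S = \{u,v\} \subset \Supp{T}{\N{T}}$, I obtain one vector $x \in \N{T}$ with $\{u,v\} \subset \Supp{T}{x}$, that is, a $0$-eigenvector of $A(T)$ with $x_u \neq 0$ and $x_v \neq 0$. The goal is now to show that such an $x$ cannot exist.

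Next I would pass to the subforest $F := T\langle \Supp{T}{x} \rangle$ induced by the support of $x$; since $T$ is a tree, $F$ is a forest. Because $x_u, x_v \neq 0$ and $\{u,v\} \in E(T)$, the edge $\{u,v\}$ lies in $F$, so the connected component $C$ of $F$ containing $u$ and $v$ has at least two vertices; being a tree, $C$ then has a vertex $p$ of degree one in $C$. Let $q$ be the unique neighbor of $p$ inside $C$, so $q \in \Supp{T}{x}$ and $x_q \neq 0$. Since $F$ is an \emph{induced} subgraph of $T$, the neighbors of $p$ in $T$ lying in $\Supp{T}{x}$ are exactly the neighbors of $p$ in $F$, hence in $C$, hence just $q$; every other neighbor $z$ of $p$ in $T$ satisfies $x_z = 0$. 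Evaluating the eigenvector equation at $p$ therefore gives
\[
0 \;=\; \omega_x(p) \;=\; \sum_{z \sim p} x_z \;=\; \sum_{\substack{z \sim p \\ z \in \Supp{T}{x}}} x_z \;+\; \sum_{\substack{z \sim p \\ z \notin \Supp{T}{x}}} x_z \;=\; x_q + 0 \;=\; x_q,
\]
contradicting $x_q \neq 0$. Hence $N(v) \cap \Supp{}{T} = \emptyset$.

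I do not expect a serious obstacle: the whole content is the observation that the support of a single $0$-eigenvector of a forest is an independent set, which is forced by examining a degree-one vertex of the induced subforest carried by that support. The only points that need care are (i) invoking Lemma~\ref{SS2} so that a \emph{single} null vector witnesses that both $u$ and $v$ are supported, rather than two possibly different vectors; and (ii) the neighborhood bookkeeping in the last display, where one uses both that $F$ is induced and that $p$ has degree one in its component of $F$. Acyclicity of $T$ enters precisely in guaranteeing that $F$, once it contains an edge, has such a degree-one vertex.
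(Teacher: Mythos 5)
Your proof is correct, and its combinatorial core is genuinely different from the paper's. Both arguments start identically: assume two adjacent supported vertices and invoke Lemma~\ref{SS2} to get a \emph{single} $0$-eigenvector $x$ witnessing both (the paper takes $x$ with $\Supp{T}{x}=\Supp{}{T}$; you only need $x_u,x_v\neq 0$, a more economical use of the lemma). From there the paper runs a sign argument: since $\omega_x(u_2)=0$ and one neighbor of $u_2$ has nonzero value, some other neighbor must have a value of opposite sign, and iterating this produces an ever-growing path of supported vertices, contradicting finiteness (acyclicity is what guarantees the path never closes up). You instead pass to the induced forest $F=T\langle\Supp{T}{x}\rangle$, note that the component containing the edge $\{u,v\}$ has a leaf $p$, and evaluate the eigenvector equation at $p$, where exactly one neighbor carries a nonzero value; this kills the argument in one step, with no signs and no iteration. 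Your bookkeeping is right: because $F$ is induced, $N_F(p)=N_T(p)\cap\Supp{T}{x}$, and since $p$ is a leaf of its component this set is the singleton $\{q\}$, so $0=\omega_x(p)=x_q$ is an immediate contradiction. What your route buys is a cleaner and slightly more general statement — the support of \emph{any individual} null vector of a forest is an independent set — proved without the somewhat informal ``continuing in this way we obtain an infinite path'' step of the paper; what the paper's route buys is the sign information (adjacent to any supported vertex with positive value there is one with negative value), which is not needed here but is in the spirit of eigenvector sign analyses elsewhere in the literature.
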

		
		\begin{proof}
Assume to the contrary that there exist \(u_{1}, u_{2} \in \Supp{}{T}\) such that they are neighbors.
Then, by Lemma \ref{SS2} there exists \(x\), a \(0\)-eigenvector of \(T\),  such that \(\Supp{}{T}=\Supp{T}{x}\), i.e., the non-null coordinates of \(x\) are those in \(\Supp{}{T}\), in particular  \(x_{u_{1}} \neq 0\) and \(x_{u_{2}} \neq 0\): 
\[ 
0=0 \cdot x_{v} = \omega_{x}(v)= \sum_{z \in N(v)}x_{z}
\]
but \(u_{1} \in N(u_{2}) \cap  \Supp{}{T}\), then there exists \(u_{3} \in N(u_{2})\) such that \(\sgn{x_{u_{3}}}=-\sgn{x_{u_{1}}}\). Now apply the same argument but with \(u_{2}\) and \(u_{3}\)
in place of \(u_{1}\) and \(u_{2}\) respectively, we obtain a vertex \(u_{4} \in N(u_{3}) \cap \Supp{T}\), different from \(u_{2}\). Note that \(u_{4} \neq u_{1}\); elsewhere we obtain a cycle, which contradicts the assumption that \(T\) is a tree. Continuing in this way, we obtain an infinite path each of whose vertices belongs to \(\Supp{}{T}\). So we arrive at a contradiction.
\end{proof}

\section{S-Trees}

We will prove that all trees are built with two types of bricks. In this section we will introduce the first class of bricks, they are a special kind of singular trees.  

\begin{definition}
A tree \(S\) is an \textbf{S-tree} if \(
N\left[   \Supp{}{S} \right]=V(S) \).
\end{definition}

Let \(S\) be the tree in Figure \ref{Fig2}, we will see that \(S\) is an S-tree, but the tree in Figure \ref{Fig1} is not.

\tikzstyle{every node}=[circle, draw, fill=white!,
inner sep=0.1pt, minimum width=14pt]

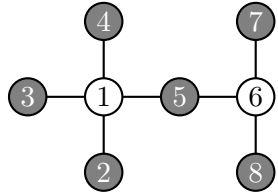
\begin{figure}[h]
	\centering
\begin{tikzpicture}[thick,scale=0.2]%
\draw 
(0,0) node{1}
(-5,0) node[fill=gray!]{\textcolor{white}{3}} -- (0,0)
(0,-5) node[fill=gray!]{\textcolor{white}{2}} -- (0,0)
(0,5) node[fill=gray!]{\textcolor{white}{4}} -- (0,0)
(5,0) node[fill=gray!]{\textcolor{white}{5}} -- (0,0)
(10,0) node{6} -- (5,0)
(10,5) node[fill=gray!]{\textcolor{white}{7}} -- (10,0)
(10,-5) node[fill=gray!]{\textcolor{white}{8}} -- (10,0);

\end{tikzpicture}
\caption{S is an S-tree}
\label{Fig2}
\end{figure}

The adjacency matrix of \(S\) is:
\[
A(S)=
\left[
{ 
	\begin{array}{cccccccc}
	0 & 1 & 1 & 1 & 1 & 0 & 0 & 0\\
	1 & 0 & 0 & 0 & 0 & 0 & 0 & 0\\
	1 & 0 & 0 & 0 & 0 & 0 & 0 & 0\\
	1 & 0 & 0 & 0 & 0 & 0 & 0 & 0\\
	1 & 0 & 0 & 0 & 0 & 1 & 0 & 0\\
	0 & 0 & 0 & 0 & 1 & 0 & 1 & 1\\
	0 & 0 & 0 & 0 & 0 & 1 & 0 & 0\\
	0 & 0 & 0 & 0 & 0 & 1 & 0 & 0\\
	\end{array}
}
\right]
\]
Clearly \(0 \in \sigma (S)\), and the set  
\(
\{(0,1,0,0,-1,0,0,1)^{t}, 
(0,0,1,0,-1,0,0,1)^{t}, \newline (0,0,0,1,-1,0,0,1)^{t},(0,0,0,0,0,0,1,-1)^{t}\}
\)
is a basis of the \(0\)-eigenspace of \(S\).  Thus, by Lemma \ref*{SS1}, \(\Supp{}{S}= \{2,3,4,5,7,8\}
\) and 
\[
N \left[ \Supp{}{S} \right]= N \left[ \{2,3,4,5,7,8\} \right]=V(T)
\]
This prove that \(S\) is an S-tree.

\begin{definition}
	Let \(S\) be an S-tree. The \textbf{core} of \(S\), denoted by \(\Core{S}\), is defined to be the set of all the neighbors of some supported vertex of \(S\):
	\[
	\Core{S}:=N(\Supp{}{S})
	\]
	A vertex \(v\) is called a \textbf{core-vertex} of \(S\) if \(v \in \Core{S}\). We will denote by  \(\core{S}\) the cardinality of \(\Core{S}\).
\end{definition}

There is just one S-tree without core, the tree of one vertex. Later we will define the notion core for arbitrary trees, see Definition \ref{CoreTreeGeneral}. For S-tree, the core is set of all non-supported vertices:
\begin{lemma}
	Let \(S\) be and S-tree. Then \(\Core{S}=V(T) \setminus \Supp{}{S}\).
\end{lemma}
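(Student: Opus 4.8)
The plan is to prove the two inclusions $\Core{S} \subseteq V(S) \setminus \Supp{}{S}$ and $V(S) \setminus \Supp{}{S} \subseteq \Core{S}$ separately, using the defining property $N[\Supp{}{S}] = V(S)$ of an S-tree together with the results already established for supports of null spaces of trees.

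For the first inclusion, take any core-vertex $v \in \Core{S} = N(\Supp{}{S})$. Then $v$ is a neighbor of some supported vertex $w \in \Supp{}{S}$. By Lemma \ref{Neighboors of supported}, no neighbor of a supported vertex can itself be supported, so $v \notin \Supp{}{S}$. Hence $\Core{S} \subseteq V(S) \setminus \Supp{}{S}$. This direction is essentially immediate from Lemma \ref{Neighboors of supported}.

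For the reverse inclusion, take $v \in V(S) \setminus \Supp{}{S}$, so $v$ is not supported. Since $S$ is an S-tree, $V(S) = N[\Supp{}{S}] = \Supp{}{S} \cup N(\Supp{}{S})$, so every vertex of $S$ is either supported or a neighbor of a supported vertex. As $v$ is not supported, it must lie in $N(\Supp{}{S}) = \Core{S}$. Thus $V(S) \setminus \Supp{}{S} \subseteq \Core{S}$, and combining the two inclusions gives equality. (Here I am writing $V(S)$ rather than $V(T)$ as in the displayed computation in the excerpt, which appears to be a typo for $V(S)$.)

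The proof is genuinely short; the only subtlety — and the reason Lemma \ref{Neighboors of supported} is needed — is that a priori $\Core{S} = N(\Supp{}{S})$ and $\Supp{}{S}$ need not be disjoint, and one must rule out overlap to conclude that the non-supported vertices are exactly the core. Once disjointness is known, the equality follows purely from the set-theoretic identity $N[X] = X \cup N(X)$ applied to $X = \Supp{}{S}$. So the "hard part" is really just invoking the earlier lemma correctly; there is no combinatorial obstacle beyond that.
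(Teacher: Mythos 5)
Your proof is correct and follows essentially the same route as the paper's: both use Lemma \ref{Neighboors of supported} to get that $\Supp{}{S}$ and $N(\Supp{}{S})$ are disjoint, and the S-tree identity $V(S)=N[\Supp{}{S}]=\Supp{}{S}\cup N(\Supp{}{S})$ for the rest. You are also right that $V(T)$ in the statement is a typo for $V(S)$.
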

\begin{proof}
	By Lemma \ref{Neighboors of supported} \(\Supp{}{S}\) is an independent set of \(S\), then \(N(\Supp{}{S}) \cap V(S) =\emptyset\). As \(S\) is an S-tree: \(V(S)=N[\Supp{}{S}]=\Supp{}{S}\cup N(\Supp{}{S})\).
\end{proof}
 The next lemmata are technical but useful. The first one tells that every non-supported vertex of an S-tree has, as neighbors, at least two supported vertices.
\begin{lemma} \label{lemma-number of pendent neighboors}
Let \(S\) be an S-tree. If \(v \in \Core{S}\), then \(
\left| N(v) \cap   \Supp{}{S} \right| \geq 2.
\)
\end{lemma}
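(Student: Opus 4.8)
The plan is to argue by contradiction. Suppose $v \in \Core{S}$ but $|N(v) \cap \Supp{}{S}| \le 1$. Since $v \in \Core{S} = N(\Supp{}{S})$, the vertex $v$ has at least one supported neighbor, so we must be in the case $|N(v) \cap \Supp{}{S}| = 1$; call this unique supported neighbor $u$. By Lemma \ref{SS2} choose a $0$-eigenvector $x$ of $S$ with $\Supp{S}{x} = \Supp{}{S}$, so that in particular $x_u \ne 0$.

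The key step is to exploit the eigenvector equation at $v$. Since $x$ is a $0$-eigenvector, $\omega_x(v) = \sum_{z \in N(v)} x_z = 0$. Every neighbor $z$ of $v$ other than $u$ lies outside $\Supp{}{S}$ (by our assumption), hence $x_z = 0$ for all such $z$; therefore $0 = \omega_x(v) = x_u$, contradicting $x_u \ne 0$. This is essentially the whole argument, so I do not anticipate a genuine obstacle; the only point requiring a little care is the bookkeeping at the start — namely noting that $v \in \Core{S}$ forces $|N(v) \cap \Supp{}{S}| \ge 1$, so the hypothesis $|N(v)\cap\Supp{}{S}| \le 1$ really does pin us down to the case of exactly one supported neighbor, which is the case the eigenvector equation then rules out. (One could also phrase it uniformly: for \emph{any} $v \in \Core{S}$, the equation $\sum_{z \in N(v)} x_z = 0$ shows the supported neighbors of $v$ cannot consist of a single vertex, since a single nonzero term cannot sum to zero; combined with $v$ having at least one supported neighbor, this gives $\ge 2$.)

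Thus the structure of the proof is: (1) pick a $0$-eigenvector $x$ realizing the full support via Lemma \ref{SS2}; (2) write the eigenvector equation $\omega_x(v) = 0$; (3) observe that all terms corresponding to non-supported neighbors vanish; (4) conclude that the surviving terms — those from $N(v) \cap \Supp{}{S}$ — must themselves sum to zero, which is impossible for a sum of exactly one nonzero real number, and also impossible for an empty sum given that $v \in \Core{S}$. Hence $|N(v) \cap \Supp{}{S}| \ge 2$.
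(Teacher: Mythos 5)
Your proof is correct and follows essentially the same route as the paper: both invoke Lemma \ref{SS2} to obtain a $0$-eigenvector $x$ with $\Supp{S}{x}=\Supp{}{S}$, note that $v\in\Core{S}$ guarantees at least one supported neighbor, and then use the equation $\omega_x(v)=0$ to conclude that a single nonzero term cannot sum to zero, forcing at least two supported neighbors. The only difference is cosmetic (you phrase it as a contradiction with the case of exactly one supported neighbor, the paper states the count directly).
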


\begin{proof}
If \(\left| N(v) \cap   \Supp{}{S} \right|=0\), then \(v \notin N\left[   \Supp{}{S} \right]\), because it has no neighbors in \(  \Supp{}{S}\), but this implies \(S \neq N\left[   \Supp{}{S} \right]\), which is a contradiction because we assume that \(S\) is an S-tree. Then \(\left| N(v) \cap   \Supp{}{S} \right|>0\) for all \(v \in \Core{S}\).

By Lemma \ref{SS2} there exists \(z\), a 0-eigenvector of \(S\), such that \(\Supp{}{S}=\Supp{S}{z}\). Then, as \(\omega_{z}(v)=0\), there are at least two vertices of \(N(v)\) such that its respective coordinates in \(z\) are non-zero.
\end{proof}

Roughly speaking, the next lemma says that the core-vertices of an S-tree fulfills the \text {Hall's condition}, see \cite{jukna2013extremal}. Hence, in any S-tree there are more supported vertices than core vertices.

\begin{lemma}[Hall's condition for S-trees] \label{cardinalities in S-trees}
Let \(S\) be an S-tree. If \(U \subset \Core{S} \), then 
\(
| N(U) \cap  \Supp{}{S}|>|U|
\).
\end{lemma}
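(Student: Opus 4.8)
The plan is to prove a strengthened Hall-type inequality, $|N(U) \cap \Supp{}{S}| > |U|$, for every nonempty $U \subset \Core{S}$ by induction on $|U|$, using the tree structure of $S$ together with the previous lemmas. The base case $|U| = 1$ is exactly Lemma \ref{lemma-number of pendent neighboors}: a single core-vertex $v$ satisfies $|N(v) \cap \Supp{}{S}| \geq 2 > 1$. (The strict inequality is essential; a nonstrict Hall condition would not suffice for later applications, and in a tree we always gain at least one extra supported vertex because there are no cycles to "reuse" neighborhoods.)

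For the inductive step, the natural tool is the acyclicity of $S$. Given $U \subset \Core{S}$ with $|U| = k+1$, I would pick a vertex $v \in U$ that is, in a suitable sense, "extremal" — for instance, choose a leaf of the subtree of $S$ spanned by $U$ together with the paths joining its members, or more concretely order $V(S)$ by distance from a fixed root and take $v \in U$ farthest from the root. Set $U' = U \setminus \{v\}$. By the induction hypothesis $|N(U') \cap \Supp{}{S}| > |U'| = k$, so $|N(U') \cap \Supp{}{S}| \geq k+1$. It then suffices to exhibit a supported vertex in $N(v)$ that does not lie in $N(U')$. Since $v$ is extremal, at most one edge leaves $v$ "towards" $U'$; every other neighbor of $v$ lies in a branch of the tree containing no vertex of $U'$, and by Lemma \ref{lemma-number of pendent neighboors} $v$ has at least two supported neighbors, so at least one supported neighbor $w$ of $v$ sits in such a branch. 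Because $S$ is a tree, the only way $w$ could also be adjacent to some $u \in U'$ is via that one path, which would create a cycle — contradiction. Hence $w \in (N(v) \cap \Supp{}{S}) \setminus N(U')$, giving $|N(U) \cap \Supp{}{S}| \geq |N(U') \cap \Supp{}{S}| + 1 \geq k+2 > |U|$.

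The main obstacle I anticipate is making the "extremal vertex" argument airtight: one must be careful that the chosen $v$ really has at most one neighbor on the $U'$-side and that a supported neighbor on the far side genuinely fails to be reached by $N(U')$. The cleanest way to handle this is to work inside the minimal subtree $T_U \subseteq S$ containing $U$ (the union of all paths between pairs of vertices of $U$), take $v$ to be a vertex of $U$ that is a leaf of $T_U$ or as close to a leaf as possible, and then argue that the component of $S - v$ not containing $T_U \setminus \{v\}$ still contains a supported neighbor of $v$ (here Lemma \ref{lemma-number of pendent neighboors} supplies the second supported neighbor, and if one supported neighbor is forced to lie on the $T_U$-side the other must lie on the far side). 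Once that component is isolated, the tree property immediately forbids any $u \in U'$ from being adjacent to a vertex in it, and the count closes. An alternative, perhaps slicker, route is to invoke the defect form of Hall's theorem directly: by Lemma \ref{lemma-number of pendent neighboors} the bipartite "incidence" graph between $\Core{S}$ and $\Supp{}{S}$ has minimum degree $\geq 2$ on the $\Core{S}$ side and is a forest (being a subgraph of the tree $S$), and a counting argument on any subforest spanned by $U$ and its supported neighbors — edges versus vertices in a forest — yields $|N(U) \cap \Supp{}{S}| \geq (\text{edges}) - |U| + (\#\text{components}) \geq 2|U| - |U| = |U|$ with the strictness coming from the forest having at least as many components as needed; but I would present the induction as the primary argument since it is more transparent.
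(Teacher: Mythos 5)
Your induction is correct, and the inductive step is organized differently from (and more cleanly than) the paper's. The paper also inducts on $|U|$ and peels off one vertex $v$, but it takes $v$ \emph{arbitrary}; consequently $U-v$ together with its supported neighbours may induce a forest with several components $W_1,\dots,W_t$, the inductive hypothesis has to be applied to each piece $U_i=V(W_i)\cap U$ separately, and a two-case analysis on whether $|N(v)\cap\Supp{}{S}|>t$ or $\le t$ is needed to recover the strict inequality. By instead choosing $v$ to be a leaf of the Steiner tree of $U$ you guarantee that all of $U'=U\setminus\{v\}$ lies in a single component of $S-v$, so $v$ has at most one supported neighbour on that side; Lemma \ref{lemma-number of pendent neighboors} then supplies a second supported neighbour $w$ in another branch, and acyclicity keeps $w$ out of $N(U')$, so the step closes in one line with no case split. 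That is a genuine simplification, and your worry about making the extremal choice airtight is easily discharged: every leaf of the Steiner tree of $U$ belongs to $U$, and each component of $S-v$ contains exactly one neighbour of $v$. One small correction to your alternative route: the forest count does yield strictness provided you keep the component term --- the bipartite incidence forest $F$ on $U\cup\bigl(N(U)\cap\Supp{}{S}\bigr)$ satisfies $e(F)\ge 2|U|$ and $e(F)\le v(F)-c(F)\le |U|+|N(U)\cap\Supp{}{S}|-1$, whence $|N(U)\cap\Supp{}{S}|\ge |U|+1$; in your displayed chain you dropped the $c(F)\ge 1$ and landed only at $\ge |U|$. Either of your arguments, made precise, is shorter than the paper's.
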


\begin{proof}
By induction over the cardinality of \(U\). If \(|U|=1\), then, by Lemma \ref{lemma-number of pendent neighboors}, \(| N(U) \cap  \Supp{}{S}| \geq 2 >1 =|U|\). Assume the lemma holds if \(|U|<k\), with \(k \geq 2\). Let \(U \subset \Core{S}\) with \(|U|=k\). Let \(v\) be a vertex of \(U\). Let \(W_{1},\cdots,W_{t}\) be the trees of the forest \(S \langle (U-v) \cup (N(U-v) \cap \Supp{}{S})\rangle\). Set \(U_{i}=V(W_{i}) \cap U\), for \(i \in [t]\). As \(|U_{i}|<k\), by inductive hypothesis, 
\[
|N_{S}[U_{i}] \cap \Supp{}{S}|\geq |U_{i}|+1
\]
Therefore
\begin{align*}
| N_{S}(U-v) \cap  \Supp{}{S}| & = \left| \left(\bigcup_{i=1}^{t} N_{S}(U_{i})\right)  \cap \Supp{}{S} \right|\\
{} & = \sum_{i=1}^{t} |N_{S}[U_{i}] \cap \Supp{}{S}|\\
{} & \geq \sum_{i=1}^{t} |U_{i}|+1\\
{} & =|U-v|+t
\end{align*}
As \(S\) is a tree, for \(i \in [t]\) holds \(|N_{S}(v) \cap V( W_{i}) \cap \Supp{}{S}| \leq 1\), otherwise we will get a cycle in \(S\). 

If \(| N(v) \cap  \Supp{}{S}| >t\), then \(|\left(N(v) \cap \Supp{}{S}\right) \setminus N(U-v) | \geq 1\). Therefore
\begin{align*}
| N(U) \cap  \Supp{}{S}|& =| N(U-v) \cap  \Supp{}{S}|+|\left(N(v) \cap \Supp{}{S}\right) \setminus N(U-v) |\\ {} & \geq |U-v|+t +1\\ {} & > |U|
\end{align*}
If \(| N(v) \cap  \Supp{}{S}| \leq t\), then \(t \geq 2\). Hence \(|U-v|+t>|U|\). Therefore
\begin{align*}
| N(U) \cap  \Supp{}{S}|  & =| N(U-v) \cap  \Supp{}{S}|+|\left(N(v) \cap \Supp{}{S}\right) \setminus N(U-v) | \\ {} & \geq |U-v|+t \\ {} & > |U|
\end{align*}
\end{proof}

\begin{corollary} \label{coroRefi1}
	Let \(S\) be an S-tree. There exists a matching \(M\) in \(S\) of cardinality \(\core{S}\), and each edge of \(M\) has as incidents vertices a vertex of \(\Core{S}\) and a vertex of \(\Supp{}{S}\). 
\end{corollary}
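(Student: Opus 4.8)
The plan is to deduce the corollary from Hall's marriage theorem, using Lemma~\ref{cardinalities in S-trees} to verify Hall's condition. First I would form the bipartite graph \(H\) whose two vertex classes are \(\Core{S}\) and \(\Supp{}{S}\), with an edge \(\{u,v\}\) in \(H\) for each edge \(\{u,v\}\) of \(S\) having \(u\in\Core{S}\) and \(v\in\Supp{}{S}\). The point is that \(H\) already records every edge of \(S\) incident to a core vertex: by Lemma~\ref{Neighboors of supported} the set \(\Supp{}{S}\) is independent in \(S\), and, as established above, \(\Core{S}=V(S)\setminus\Supp{}{S}\); hence no edge of \(S\) has both endpoints in \(\Supp{}{S}\), and for every \(U\subseteq\Core{S}\) the neighbourhood of \(U\) in \(H\) is precisely \(N_{S}(U)\cap\Supp{}{S}\). (Edges of \(S\) joining two core vertices, which may exist, are simply irrelevant to this bipartite graph.)

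Next I would check Hall's condition for the class \(\Core{S}\). If \(U\subseteq\Core{S}\) is empty there is nothing to verify; if \(U\) is nonempty, Lemma~\ref{cardinalities in S-trees} gives \(|N_{S}(U)\cap\Supp{}{S}|>|U|\), so the \(H\)-neighbourhood of \(U\) has at least \(|U|\) elements. By Hall's marriage theorem (see \cite{jukna2013extremal}), \(H\) therefore admits a matching \(M\) saturating every vertex of \(\Core{S}\). Since \(M\subseteq E(H)\subseteq E(S)\), it is in particular a matching of \(S\); every edge of \(M\) joins a vertex of \(\Core{S}\) to a vertex of \(\Supp{}{S}\); and \(|M|=|\Core{S}|=\core{S}\), which is exactly the assertion.

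I do not expect a genuine obstacle here: Lemma~\ref{cardinalities in S-trees} is doing all the work, and the only point requiring care is the degenerate S-tree with a single vertex, where \(\Core{S}=\emptyset\) and the empty matching suffices — a case Hall's theorem handles automatically. It is worth remarking that the inequality in Lemma~\ref{cardinalities in S-trees} is \emph{strict}, so \(M\) cannot in general be enlarged to a matching saturating \(\Supp{}{S}\); this strictness is what will later yield \(\supp{}{S}>\core{S}\), but it is not needed for the present statement.
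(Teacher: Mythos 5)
Your proof is correct and is exactly the argument the paper intends: the corollary is stated immediately after Lemma~\ref{cardinalities in S-trees} precisely so that Hall's marriage theorem, applied to the bipartite graph between \(\Core{S}\) and \(\Supp{}{S}\), yields a matching saturating \(\Core{S}\). Your observations about the independence of \(\Supp{}{S}\), the single-vertex degenerate case, and the role of the strict inequality are all accurate.
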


Recall that \(\mathcal{M}(G)\) is the set of all maximum matchings of a graphs \(G\), and \(m(G):=|\mathcal{M}(G)|\).

\begin{theorem}\label{MatchingMaximumStrees}
	Let \(S\) be an S-tree. Then for all \(M \in \mathcal{M}(S)\), and for all \(e \in M\) we have that \(|e \cap \Core{S}|=1\). 
\end{theorem}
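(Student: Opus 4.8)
The plan is to first pin down the matching number $\nu(S)$ in terms of $\core{S}$, and then run a one-line counting argument on the core vertices saturated by a maximum matching.

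First I would record the structural facts already available. Since $S$ is an S-tree we have $V(S)=N[\Supp{}{S}]=\Supp{}{S}\cup N(\Supp{}{S})=\Supp{}{S}\cup\Core{S}$, and by Lemma \ref{Neighboors of supported} the set $\Supp{}{S}$ is independent. Consequently every edge of $S$ has at least one endpoint in $\Core{S}$: an edge with both endpoints outside $\Core{S}$ would join two vertices of $\Supp{}{S}$, which is impossible. Hence $\Core{S}$ is a vertex cover of $S$, so $\tau(S)\le\core{S}$. On the other hand Corollary \ref{coroRefi1} furnishes a matching of size $\core{S}$, so $\nu(S)\ge\core{S}$. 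A tree is bipartite, so König--Egerv\'ary gives $\nu(S)=\tau(S)$, and combining the three facts yields $\nu(S)=\tau(S)=\core{S}$; in particular $\Core{S}$ is a minimum vertex cover.

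Now take $M\in\mathcal{M}(S)$, so $|M|=\nu(S)=\core{S}$. By the vertex-cover property every $e\in M$ satisfies $|e\cap\Core{S}|\ge 1$, so it suffices to exclude edges with $|e\cap\Core{S}|=2$. Let $k$ be the number of such edges in $M$. Because the edges of a matching are pairwise vertex-disjoint, $M$ saturates exactly $2k+(|M|-k)=\core{S}+k$ distinct vertices of $\Core{S}$; but $|\Core{S}|=\core{S}$, which forces $k=0$. Therefore $|e\cap\Core{S}|=1$ for every $e\in M$.

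I do not expect a real obstacle here. The only step that needs care is the first paragraph — correctly identifying $\Core{S}$ as a (minimum) vertex cover and invoking König--Egerv\'ary to conclude $\nu(S)=\core{S}$ — after which the counting argument is immediate. (In fact one can bypass the bound $\tau(S)\le\core{S}$ altogether: the counting argument together with $\nu(S)\ge\core{S}$ already shows $M$ saturates at least $\core{S}+k$ core vertices while $|\Core{S}|=\core{S}$, so $k=0$.)
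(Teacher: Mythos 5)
Your proof is correct and follows essentially the same route as the paper's: both hinge on the independence of \(\Supp{}{S}\) to see that every matching edge meets \(\Core{S}\), on Corollary \ref{coroRefi1} for a matching of size \(\core{S}\), and on a pigeonhole count of the saturated core vertices to rule out edges lying entirely in \(\Core{S}\). The K\"onig--Egerv\'ary detour is, as you note yourself, dispensable.
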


\begin{proof} 
		Let \(M\) be a matching in \(S\) and \(e \in M\). Then, as supported vertices are never neighbors in an S-tree, \(e \cap \Core{S} \neq \emptyset\). If there exists \(e \in M\) such that \(e \subset \Core{S}\), then, by pigeonhole principle, \(|M|<\core{S}\). Therefore, by Corollary \ref{coroRefi1},  \(M \notin \mathcal{M}(S)\).
\end{proof}

\begin{corollary}
	Let \(S\) be an S-tree. If \(v(S)=1\), then  \(\nu(S)=1\), otherwise \(\nu(S)=\core{S}\).
\end{corollary}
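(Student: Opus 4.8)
The plan is to prove the corollary by treating the two cases in its statement. The case $v(S)=1$ is immediate: a tree on a single vertex has no edges, so its matching number is forced by the trivial bound (there being only the empty matching). For the main case $v(S)\geq 2$, I would prove the equality $\nu(S)=\core{S}$ by establishing the two inequalities $\nu(S)\geq\core{S}$ and $\nu(S)\leq\core{S}$ separately, and both of these are essentially already done in the preceding results.

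First I would get the lower bound $\nu(S)\geq\core{S}$ directly from Corollary \ref{coroRefi1}: that corollary exhibits an explicit matching $M$ in $S$ with $|M|=\core{S}$ (each of its edges joining a vertex of $\Core{S}$ to a vertex of $\Supp{}{S}$), so a maximum matching has at least $\core{S}$ edges. For the upper bound, I would take any $M\in\mathcal{M}(S)$ and invoke Theorem \ref{MatchingMaximumStrees}, which says every edge $e\in M$ satisfies $|e\cap\Core{S}|=1$, i.e. each edge of $M$ contributes exactly one endpoint lying in $\Core{S}$. Since $M$ is a matching, its edges are pairwise vertex-disjoint, so the core endpoints of distinct edges are distinct; hence the map sending each $e\in M$ to its unique endpoint in $\Core{S}$ is an injection $M\to\Core{S}$, giving $|M|\leq|\Core{S}|=\core{S}$. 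Combining the two inequalities yields $\nu(S)=\core{S}$.

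I do not expect any genuine obstacle: the real work was carried out in Lemma \ref{cardinalities in S-trees} (Hall's condition), Corollary \ref{coroRefi1}, and Theorem \ref{MatchingMaximumStrees}, and this corollary is just the bookkeeping that packages the matching-size bounds from those two facts into an exact formula, together with the degenerate one-vertex observation. The only point that requires a little care is checking that in the $v(S)\geq 2$ case the S-tree does have a nonempty core (so that $\core{S}\geq 1$ and the formula is not vacuous), which follows since a tree on at least two vertices has a pendant vertex whose neighbor, by Lemma \ref{lemma-number of pendent neighboors} and the S-tree definition, lies in $\Core{S}$.
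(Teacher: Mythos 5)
Your argument for the main case is correct and is exactly what the paper intends (it states this corollary without proof): the lower bound $\nu(S)\geq\core{S}$ comes from the matching exhibited in Corollary \ref{coroRefi1}, and the upper bound from Theorem \ref{MatchingMaximumStrees} via the injection sending each matching edge to its unique endpoint in $\Core{S}$. One caveat on the degenerate case: a one-vertex tree has no edges, so its matching number is $0$, not $1$ as the corollary's statement reads; your ``trivial bound'' remark in fact computes $\nu(S)=0$ and thus exposes a typo in the statement rather than verifying it, and it would be better to say so explicitly (also, the nonemptiness of the core for $v(S)\geq 2$ follows from Lemma \ref{L1} together with the S-tree definition, not from Lemma \ref{lemma-number of pendent neighboors}).
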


From K\"{o}nig-Egerv\'{a}ry Theorem and the previous corollary we deduce that \( \tau(S)=\core{S} \).

\begin{theorem}\label{S-tree-Independent}
	Let \(S\) be an S-tree. Then \(\alpha(S)=\supp{}{S}\) and \(\Supp{}{S}\) is the unique maximum independent set of \(S\). 
\end{theorem}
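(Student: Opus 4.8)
The plan is to prove both claims together by relating independent sets in $S$ to vertex covers via König–Egerváry, using the key structural fact (already established) that $\tau(S)=\core{S}$ and, from Corollary \ref{coroRefi1} and Lemma \ref{Neighboors of supported}, that $\Supp{}{S}$ is an independent set of $S$. Since $\alpha(S)+\tau(S)=v(S)$ holds in every graph and, for an S-tree, $V(S)=\Supp{}{S}\,\dot\cup\,\Core{S}$ with $|\Core{S}|=\core{S}=\tau(S)$, we immediately get $\alpha(S)=v(S)-\core{S}=\supp{}{S}$. Because $\Supp{}{S}$ is an independent set of exactly this size, it is a maximum independent set; the only remaining work is uniqueness.

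For uniqueness, I would argue as follows. Let $I$ be any maximum independent set of $S$, so $|I|=\supp{}{S}$, and let $C=V(S)\setminus I$ be the corresponding minimum vertex cover, with $|C|=\core{S}$. I want to show $C=\Core{S}$, equivalently $I=\Supp{}{S}$. Suppose not; then since $|C|=|\Core{S}|$, the set $U:=C\setminus\Core{S}$ is nonempty, and $U\subseteq\Supp{}{S}$ (as $V(S)=\Supp{}{S}\cup\Core{S}$), while $C\cap\Core{S}=\Core{S}\setminus U'$ for some nonempty $U'\subseteq\Core{S}$ with $|U'|=|U|$; here $U'=\Core{S}\setminus C\subseteq I$. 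Now consider $N(U')\cap\Supp{}{S}$: every vertex of $U'$ lies in $I$, and $I$ is independent, so $N(U')\cap I=\emptyset$, hence $N(U')\cap\Supp{}{S}\subseteq \Supp{}{S}\setminus I=U$, giving $|N(U')\cap\Supp{}{S}|\le|U|=|U'|$. But Lemma \ref{cardinalities in S-trees} (Hall's condition for S-trees), applied to the nonempty subset $U'\subseteq\Core{S}$, says $|N(U')\cap\Supp{}{S}|>|U'|$, a contradiction. Therefore $U'=\emptyset$, so $C=\Core{S}$ and $I=\Supp{}{S}$.

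The main obstacle is setting up the uniqueness argument cleanly: one must track the partition $V(S)=\Supp{}{S}\,\dot\cup\,\Core{S}$ and carefully identify the "new" core vertices $U'=\Core{S}\setminus C$ that a competing independent set $I$ would have to contain, then observe that independence of $I$ forces all their supported neighbors out of $I$ — and there are too few such vertices available, contradicting the strict Hall inequality. Everything else (the cardinality count for $\alpha(S)$, the one-vertex case where $\Supp{}{S}=V(S)$ is trivially the unique maximum independent set) is a routine bookkeeping consequence of results already proved.
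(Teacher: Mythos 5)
Your proof is correct, and it rests on the same key ingredient as the paper's --- the strict Hall inequality of Lemma \ref{cardinalities in S-trees} --- but packages the argument differently. The paper proves both claims in a single stroke: for an arbitrary independent set \(U\), if \(U_{2}:=U\cap\Core{S}\neq\emptyset\) then \(W=N(U_{2})\cap\Supp{}{S}\) satisfies \(|W|>|U_{2}|\) and is disjoint from \(U\cap\Supp{}{S}\), so \(|U|<\supp{}{S}\) outright; this yields \(\alpha(S)=\supp{}{S}\) and uniqueness simultaneously, with no appeal to K\"onig--Egerv\'ary. You instead get the value of \(\alpha(S)\) from \(\tau(S)=\core{S}\) (legitimately available, since the paper deduces it from \(\nu(S)=\core{S}\) and K\"onig--Egerv\'ary just before this theorem, so there is no circularity with the later restatement) and then run the Hall count in complementary form on \(U'=I\cap\Core{S}=\Core{S}\setminus C\): your inequality \(|N(U')\cap\Supp{}{S}|\le |U|=|U'|\) against the lemma's strict \(>|U'|\) is exactly the paper's count viewed through the cover \(C=V(S)\setminus I\). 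Your route cleanly separates the cardinality statement from uniqueness, at the cost of depending on the matching/cover machinery; the paper's single counting argument is self-contained and is what lets it then re-derive \(\tau(S)=\core{S}\) and the uniqueness of the minimum cover as corollaries. One minor nit: the independence of \(\Supp{}{S}\) comes from Lemma \ref{Neighboors of supported} alone; Corollary \ref{coroRefi1} plays no role in that step.
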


\begin{proof}
	By Lemma \ref{Neighboors of supported} the supported vertices of \(S\) are independent, therefore \(\alpha(S) \geq \supp{}{S}\). Let \(U \subset V(S)\) be an independent set of \(S\). Write \(U_{1}= U \cap   \Supp{}{S}\) and \(U_{2}= U \setminus S_{1}\). Assume \(U_{2} \neq \emptyset \). As \(U_{2} \subset \Core{S}\). From Lemma \ref{cardinalities in S-trees} there exists a set \(W \subset \Supp{}{S}\) such that \(|W|>|U_{2}|\), and \(W \subset N(U_{2})\). As \(U\) is an independent set of \(S\), it follows that \(W \cap U_{1} = \emptyset\). Hence
	\[
	|U|=|U_{1}|+|U_{2}| < |U_{1}|+|W| \leq  \supp{}{S}
	\]
	Hence \(\Supp{}{S}\) is an independent set of maximum cardinality.
\end{proof}

\begin{theorem} 
	If \(S\) is an S-tree, then \( \tau(S)=\core{S} \). Furthermore \(\Core{S}\) is the unique minimum vertex cover of \(S\).
\end{theorem}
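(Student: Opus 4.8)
The plan is to mirror the structure of the proof of Theorem~\ref{S-tree-Independent}, exploiting the fact that vertex cover is complementary to independence. Since we already know from Theorem~\ref{S-tree-Independent} that $\alpha(S)=\supp{}{S}$ and that $\Supp{}{S}$ is the unique maximum independent set, and since $\alpha(S)+\tau(S)=v(S)$, we immediately get $\tau(S)=v(S)-\supp{}{S}=\core{S}$ (using the earlier lemma that $\Core{S}=V(S)\setminus\Supp{}{S}$). For the uniqueness, I would argue that a set $C\subset V(S)$ is a minimum vertex cover if and only if its complement $V(S)\setminus C$ is a maximum independent set; since the maximum independent set is unique and equals $\Supp{}{S}$, the minimum vertex cover is unique and equals $V(S)\setminus\Supp{}{S}=\Core{S}$.

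Concretely, I would first invoke König--Egerv\'ary together with the corollary giving $\nu(S)=\core{S}$ (for $v(S)\geq 2$) to restate $\tau(S)=\core{S}$, which the text has already observed. The substantive content is the uniqueness claim. Here the key step is the standard complementation fact: in any graph, $C$ is a vertex cover if and only if $V\setminus C$ is independent, hence $C$ is a \emph{minimum} vertex cover if and only if $V\setminus C$ is a \emph{maximum} independent set. Applying Theorem~\ref{S-tree-Independent}, the unique maximum independent set is $\Supp{}{S}$, so the unique minimum vertex cover is $V(S)\setminus\Supp{}{S}$, which by the lemma $\Core{S}=V(S)\setminus\Supp{}{S}$ equals $\Core{S}$. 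One should handle the degenerate case $v(S)=1$ separately (where the single vertex is the unique minimum vertex cover of the empty edge set, or one adopts the convention $\tau=0$), but this is a one-line remark.

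I do not anticipate a genuine obstacle: the whole argument is a short deduction from results already proved. The only point requiring a little care is making sure the complementation equivalence is stated cleanly (a vertex cover is exactly a set meeting every edge, and its complement contains no edge, i.e.\ is independent — and this correspondence is inclusion-reversing and hence sends minimum to maximum), and that the trivial single-vertex tree is either excluded or treated by convention, consistently with how the preceding corollary phrased the $v(S)=1$ case.
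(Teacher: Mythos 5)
Your argument is correct and is essentially the same as the paper's: both deduce $\tau(S)=v(S)-\alpha(S)=\core{S}$ from Theorem~\ref{S-tree-Independent} and the identity $\alpha(S)+\tau(S)=v(S)$, and both obtain uniqueness by complementation from the uniqueness of the maximum independent set. Your extra remark about the single-vertex case is a harmless refinement the paper omits.
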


\begin{proof}
As \(\tau(S)+\alpha(S)=v(S)\), we conclude that  \(\tau(S)=v(S)-\alpha(S)=\core{S}\). As the complement of any vertex cover set is an independent set, from Theorem \ref{S-tree-Independent}, we deduce that \(\Core{S}\) is the unique minimum vertex cover of \(S\).
\end{proof}

Let \(M\) be a matching in a graph \(G\), in what follows, \(V(M)\) stands for the set of vertices of \(M\). 

\begin{lemma} \label{SuppNSaturated}
	Let \(S\) be an S-tree. If \(v \in \Supp{}{S}\), then there exists \(M \in \mathcal{M}(S)\) such that \(v \notin V(M)\). 
\end{lemma}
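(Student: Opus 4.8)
The plan is to show that every supported vertex of $S$ is missed by some maximum matching, using an $M$-alternating path argument: I would start from a maximum matching that \emph{does} saturate the given $v$ and then flip it along a suitable alternating path issuing from $v$. First I would set aside the case $v(S)=1$, where the only matching is the empty one and there is nothing to prove, and assume $v(S)\ge 2$. Then I would record the structure of maximum matchings of $S$: by Lemma~\ref{Neighboors of supported} and the lemma right after it, $\Supp{}{S}$ is independent and $\Core{S}=V(S)\setminus\Supp{}{S}$, so every edge has at least one end in $\Core{S}$, and together with Theorem~\ref{MatchingMaximumStrees} every edge of a maximum matching has exactly one end in $\Core{S}$ and the other in $\Supp{}{S}$. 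Since $\nu(S)=\core{S}$ and this value is attained (Corollary~\ref{coroRefi1}), any maximum matching $M$ is a set of $\core{S}$ pairwise disjoint core--support edges; hence $M$ saturates all of $\Core{S}$ and exactly $\core{S}$ supported vertices, and Lemma~\ref{cardinalities in S-trees} (applied to $U=\Core{S}$) leaves $\supp{}{S}-\core{S}\ge 1$ supported vertices unsaturated.

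Next, fixing such an $M$, I would build the alternating path. If $v\notin V(M)$ there is nothing to do, so suppose $vc_1\in M$ with $c_1\in\Core{S}$. Starting from $w_0:=v$ I would grow a simple path $P=w_0c_1w_1c_2w_2\cdots$ with all $w_i\in\Supp{}{S}$, all $c_i\in\Core{S}$, the edges $w_{i-1}c_i$ in $M$ and the edges $c_iw_i$ not in $M$, according to two rules. At a core vertex $c_i$ reached along an $M$-edge, I would invoke Lemma~\ref{lemma-number of pendent neighboors}: $c_i$ has at least two neighbours in $\Supp{}{S}$, one being $w_{i-1}$, so I can choose another, $w_i$, which is not already on $P$ (else $S$ contains a cycle) and satisfies $c_iw_i\notin M$ (the $M$-edge at $c_i$ is $w_{i-1}c_i$). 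At a supported vertex $w_i$ reached along a non-$M$-edge, I would stop if $w_i\notin V(M)$; otherwise the $M$-edge at $w_i$ has its other end in $\Core{S}$, call it $c_{i+1}\ne c_i$, not yet on $P$ (again since $S$ is a tree), and I continue. As $S$ is finite and $P$ stays simple, the process must halt, and since core vertices always admit an extension it can only halt at some $w_k\in\Supp{}{S}$ with $w_k\notin V(M)$.

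Finally I would put $M':=M\,\triangle\,E(P)$. Since $P$ is an $M$-alternating path from the $M$-saturated vertex $v$ to the $M$-unsaturated vertex $w_k$, $M'$ is again a matching with $|M'|=|M|=\nu(S)$, so $M'\in\mathcal{M}(S)$; and the unique $M$-edge at $v$, namely $vc_1$, has been removed while no new edge at $v$ is added, so $v\notin V(M')$, as required. The step I expect to be the real obstacle is controlling the growth of $P$: that an extension out of a core vertex always exists is exactly Lemma~\ref{lemma-number of pendent neighboors}, and that the path never repeats a vertex — which both legitimises the flip along $P$ and guarantees termination — is where the tree hypothesis is essential; the rest is routine bookkeeping with the symmetric difference.
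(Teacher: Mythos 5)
Your proof is correct and follows essentially the same route as the paper, which formalizes the same idea as a ``Desaturater algorithm'': grow an $M$-alternating core--support path from $v$ until an $M$-unsaturated supported vertex is reached, then take the symmetric difference with $M$. Your write-up is in fact more careful than the paper's on the two points that matter (simplicity of the path via the tree hypothesis, and the fact that the walk can only halt at an unsaturated supported vertex), so there is nothing to fix.
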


\begin{proof}
	Let \(M\) be a maximum matching. If \(v \notin V(M) \) then there is nothing to prove. Hence assume that \(v \in V(M)\). We will build a maximum matching from \(M\) which do not saturate \(v\). 
	\begin{algorithm}
		Desaturater algorithm:
		\begin{enumerate}
			\item INPUT: An S-tree \(S\), a matching maximum \(M\in \mathcal{M}(S)\), a vertex \(v \in \Supp{}{S} \cap V(M)\).
			\item \(P=v\), the \textbf{desaturater-path}.
			\item \(u=v\).
			\item WHILE \(u \in V(M)\):
				\begin{enumerate}
					\item CHOSE \(c \in V(S)\) such that \(\{u,c\} \in M\).
					\item IF \( \left(N_{S}(c) \cap \Supp{}{S}\right) \setminus V(M) \neq \emptyset\):
					\begin{enumerate}
						\item CHOSE \(u \in \left(N_{S}(c) \cap \Supp{}{S}\right) \setminus V(M) \)
					\end{enumerate}
					ELSE, CHOSE \(u \in \left( N_{S}(c) \cap \Supp{}{S} \right) \setminus {u}\)
					\item \(P=P+\{c,u\}\)
				\end{enumerate}
			\item OUTPUT: the symmetric difference (of edges) between \(M\) and \(P\).
				\[
				\hat{M}=M \vartriangle P
				\]
		\end{enumerate}
	\end{algorithm}
	Clearly \(\hat{M}\) is maximum matching, and \(v \notin V(\hat{M})\).
\end{proof}

\begin{theorem}
	If \(S\) is an S-tree, then \(EG(S)=  \Supp{}{S} \).
\end{theorem}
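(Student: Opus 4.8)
The plan is to establish the two inclusions $\Supp{}{S}\subseteq EG(S)$ and $EG(S)\subseteq\Supp{}{S}$ separately, each as a short consequence of the matching facts already proved for S-trees.

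The inclusion $\Supp{}{S}\subseteq EG(S)$ requires no further work: Lemma \ref{SuppNSaturated} asserts exactly that every $v\in\Supp{}{S}$ is missed by some maximum matching $M\in\mathcal{M}(S)$, which is the defining condition for membership in $EG(S)$.

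For the converse, I would first dispose of the degenerate S-tree on one vertex, where $V(S)=\Supp{}{S}$ and there is nothing to prove. So assume $v(S)>1$; then $\nu(S)=\core{S}$ by the corollary following Theorem \ref{MatchingMaximumStrees}, and $\Core{S}=V(S)\setminus\Supp{}{S}$. Fix $v\notin\Supp{}{S}$, so that $v\in\Core{S}$, and let $M\in\mathcal{M}(S)$ be arbitrary. By Theorem \ref{MatchingMaximumStrees}, each edge $e\in M$ satisfies $|e\cap\Core{S}|=1$, so sending $e$ to its unique endpoint in $\Core{S}$ defines a map $M\to\Core{S}$ that is injective because $M$ is a matching. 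Since $|M|=\nu(S)=\core{S}=|\Core{S}|$, this map is a bijection, hence $v$ lies in its image, i.e.\ $v\in V(M)$. As $M$ was an arbitrary maximum matching, $v\notin EG(S)$, which gives $EG(S)\subseteq\Supp{}{S}$.

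Everything here is bookkeeping on top of Lemma \ref{SuppNSaturated}, Theorem \ref{MatchingMaximumStrees}, and the known value of $\nu(S)$; the only point demanding any care is the counting step in the last paragraph — converting ``each maximum‑matching edge meets the core in exactly one vertex'' together with $|M|=\core{S}$ into ``every core vertex is saturated,'' via injectivity of the edge‑to‑core‑endpoint map — and remembering to isolate the one‑vertex case where $\Core{S}=\emptyset$.
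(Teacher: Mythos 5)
Your proof is correct and follows exactly the route the paper intends: the paper's proof is the one-liner ``Just use Theorem \ref{MatchingMaximumStrees} and Lemma \ref{SuppNSaturated},'' and your two inclusions (the lemma giving $\Supp{}{S}\subseteq EG(S)$, and the counting argument from $|e\cap\Core{S}|=1$ plus $|M|=\core{S}$ showing every core vertex is saturated by every maximum matching) are precisely the details that one-liner suppresses. The care you take with the one-vertex case and with the injectivity of the edge-to-core-endpoint map is appropriate and nothing is missing.
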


\begin{proof}
	Just use Theorem \ref{MatchingMaximumStrees} and Lemma \ref{SuppNSaturated}.
\end{proof}

Let \(S\) be an S-tree of order 3 or more, by definition, \(\Core{S}\) is a dominating set of \(S\) (remember that \(\Supp{}{S}\) is an independent set of \(S\)), then \(\gamma (S) \leq \core{S}\). In the S-tree \(S\) in Figure \ref{Fig3}, \(D=\{1,3,5,7,10\}\) is a minimum dominating set, in this case \(\gamma (S)=|D|=5 < 7 =\core{S}\). Even more, no subset of \(\Core{S}=\{1,2,3,4,5,6,7\}\) is a minimum dominating set of \(T\).
\tikzstyle{every node}=[circle, draw, fill=white!,
inner sep=0.1pt, minimum width=14pt]

\tikzstyle{every node}=[circle, draw, fill=white!,
inner sep=0.1pt, minimum width=14pt]

\begin{figure}[h]
	\centering
	\begin{tikzpicture}[thick,scale=0.2]%
	
	\draw 
	(0,0) node{$8$}
	(5,0) node[fill=gray]{\textcolor{white}{$1$}} -- (0,0)
	(10,0) node[fill=gray]{\textcolor{white}{$10$}} -- (5,0)
	(15,0) node{$6$} -- (10,0)
	(20,0) node{$15$} -- (15,0)
	(25,0) node[fill=gray]{\textcolor{white}{$7$}} -- (20,0)
	(30,0) node{$16$} -- (25,0)
	
	(5,5) node{$9$} -- (5,0)
	
	(15,5) node{$11$} -- (10,5)
	(10,5) node{$2$} -- (10,0)
	(20,5) node[fill=gray]{\textcolor{white}{$3$}} -- (15,5)
	(25,5) node{$12$} -- (20,5)
	
	(15,-5) node{$13$} -- (10,-5)
	(10,-5) node{$4$} -- (10,0)
	(20,-5) node[fill=gray]{\textcolor{white}{$5$}} -- (15,-5)
	(25,-5) node{$14$} -- (20,-5);
	\end{tikzpicture}
	\caption{ S-tree \(S\) and its unique minimum domination set}
	\label{Fig3}
\end{figure}
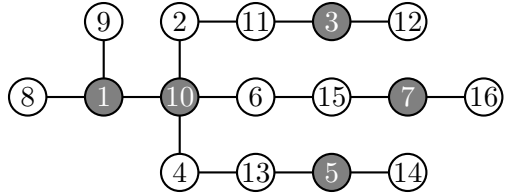







\section{Null Decomposition of Trees} \label{NDTSection}

A tree \(T\) is non-singular tree,  \textbf{N-tree} for short, if its adjacency matrix \(A(T)\) is invertible. Thus \(T\) is a N-tree if and only if \(T\) has a perfect matching, see \cite{bapat2014graphs}. By 	K\"{o}nig-Egerv\'{a}ry Theorem, if \(T\) is a N-tree, \(\alpha(T)=\frac{v(T)}{2}\). The distance between two pendant vertices of a N-tree is always greater than 2. Hence \(l(T) \leq \gamma(T) \leq \frac{v(T)}{2}\), where \(l(T):=|\{v \in V(T): \deg(v)=1 \}|\).

\begin{definition}
Let \(T\) be a tree. The \textbf{S-Set} of T, denoted by \(\mathcal{F}_{S}(T)\), is defined to be the set of connected components of the forest induced by the closed neighbor of \(\Supp{}{T}\) in \(T\):
\[
\mathcal{F}_{S}(T):=\{S \; : \; S \text{ is a connected component of } T\left\langle N\left[ \Supp{}{T} \right] \right\rangle \}
\]
The \textbf{N-set} of T, denoted by \(\mathcal{F}_{N}(T)\), is defined to be the set of connected components of the remaining forest:
\[
\mathcal{F}_{N}(T):= \{N \; :\; N \text{ is a connected component of } T \backslash \mathcal{F}_{S}(T)\}
\]
The pair of sets \((\mathcal{F}_{S}(T),\mathcal{F}_{N}(T))\)  is called the \textbf{null decomposition} of \(T\).
\end{definition}

We think the sets \(\mathcal{F}_{S}(T)\) and \(\mathcal{F}_{N}(T)\) as forests. Thus \(E(\mathcal{F}_{S}(T))\) is the set of edges of all trees in \(\mathcal{F}_{S}(T)\). With \(V(\mathcal{F}_{N}(T))\) we denote the set of vertices of all trees in \(\mathcal{F}_{N}(T)\). Even some times we talk about the S-forest \(\mathcal{F}_{S}(T)\) instead of S-set, and the about the N-forest \(\mathcal{F}_{N}(T)\) instead of N-set.

\begin{definition}\label{CoreTreeGeneral}
	Let \(T\) be a tree, the \textbf{core} of \(T\), denoted by \(\Core{T}\), is the set
	\[
	\Core{T}:=N(\Supp{}{T})
	\]
	As before, \(\core{T}=|\Core{T}|\).
\end{definition}

In the following lemma we collect together some observations about null decomposition of a tree \(T\), for referential issues.
\begin{lemma}
	Let \(T\) be a tree. Then for \(S \in \mathcal{F}_{S}(T)\)
	\begin{enumerate}
		\item The sets \(\Supp{}{T}\), \(\Core{T}\), and \(V(\mathcal{F}_{N}(T)) \) are a partition of \(V(T)\), in a weak sense (some of the sets can be empty sets).
		\item Let \(v \in \Supp{}{T}\), and \(S \in \mathcal{F}_{S}(T)\). If \(v \in e \in E(T)\), then \(e \in E(S)\), i.e. \(N_{S}(v)=N_{T}(v)\).
		\item If \(S \in \mathcal{F}_{S}(T)\), then \(v(S)\geq 3\).
		\item \(N[\Supp{}{T}\cap V(S)]=N[\Supp{}{T}]\cap V(S)=V(S)\).
		\item \(N(\Supp{}{T}\cap V(S))=N(\Supp{}{T})\cap V(S)\).
	\end{enumerate}
\end{lemma}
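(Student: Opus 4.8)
The plan is to establish the five items in the order listed, since each feeds the next. For item (1), I would argue that $\Core{T}$ and $V(\mathcal{F}_N(T))$ are by construction disjoint from $\Supp{}{T}$: the core consists of neighbors of supported vertices, and by Lemma \ref{Neighboors of supported} a neighbor of a supported vertex is never itself supported; the N-forest is what remains after deleting $T\langle N[\Supp{}{T}]\rangle$, hence contains no vertex of $\Supp{}{T}$ nor of $\Core{T}\subseteq N[\Supp{}{T}]$. Disjointness of $\Core{T}$ and $V(\mathcal{F}_N(T))$ is then immediate, and the union is all of $V(T)$ because $N[\Supp{}{T}]=\Supp{}{T}\cup\Core{T}$ accounts for every vertex in the S-forest and the N-forest accounts for the rest.

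For item (2), let $v\in\Supp{}{T}$ and suppose $\{v,w\}\in E(T)$. Then $w\in N(v)\subseteq N[\Supp{}{T}]$, so both endpoints lie in $T\langle N[\Supp{}{T}]\rangle$, and since this induced subgraph contains the edge $\{v,w\}$, the edge lies in whatever connected component $S$ contains $v$; thus $N_T(v)\subseteq V(S)$ and, as $S$ is an induced subgraph, $N_S(v)=N_T(v)$. For item (3), take $v\in\Supp{}{T}\cap V(S)$ (nonempty since $S$ is a component of the closed neighborhood of $\Supp{}{T}$, so it must contain a supported vertex — otherwise it would have been a component of the N-forest, or more carefully, every vertex of $T\langle N[\Supp{}{T}]\rangle$ is either supported or adjacent to a supported vertex). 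By Lemma \ref{L1}, a supported vertex $v$ cannot be the neighbor of a pendant vertex, so $v$ is not pendant, giving $\deg_T(v)\ge 2$; by item (2) these neighbors are all in $V(S)$, so $v(S)\ge 3$.

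Items (4) and (5) follow by combining (2) and (3). For (4): $\Supp{}{T}\cap V(S)\subseteq\Supp{}{T}$, so taking closed neighborhoods (in $T$) and intersecting with $V(S)$ gives $N[\Supp{}{T}\cap V(S)]\subseteq N[\Supp{}{T}]\cap V(S)$; and $N[\Supp{}{T}]\cap V(S)=V(S)$ because $S$ is a component of $T\langle N[\Supp{}{T}]\rangle$. For the reverse inclusion $V(S)\subseteq N[\Supp{}{T}\cap V(S)]$, take any $w\in V(S)$; either $w$ is supported, hence in the set, or $w\in\Core{T}$, so $w$ has a neighbor $v\in\Supp{}{T}$, and by item (2) applied to $v$ we get $w\in N_S(v)$, so $v\in V(S)$ and thus $w\in N[\Supp{}{T}\cap V(S)]$. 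Item (5) is the analogous statement with open neighborhoods: $N(\Supp{}{T}\cap V(S))\subseteq N(\Supp{}{T})\cap V(S)$ is clear once we note by item (2) that open neighborhoods of supported vertices in $S$ agree with those in $T$, hence land in $V(S)$; conversely if $w\in N(\Supp{}{T})\cap V(S)$ then $w$ is adjacent to some $v\in\Supp{}{T}$, and item (2) forces $v\in V(S)$, so $w\in N(\Supp{}{T}\cap V(S))$. I do not expect any real obstacle here — the only point requiring slight care is verifying in item (3) that every $S\in\mathcal{F}_S(T)$ actually contains a supported vertex, which one must extract from the definition of the S-set as components of $T\langle N[\Supp{}{T}]\rangle$ rather than of $T\langle\Core{T}\rangle$; everything else is bookkeeping with the definitions and Lemmas \ref{L1} and \ref{Neighboors of supported}.
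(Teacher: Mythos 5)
Your write-up for items (1), (2), (4) and (5) is correct and supplies exactly the details the paper waves at with ``1, 2 and 3 are direct \dots\ Similar arguments prove the rest''; in particular your careful observation that each \(S\in\mathcal{F}_S(T)\) must contain a supported vertex, and your use of item (2) to pin the relevant neighbors inside \(V(S)\), match the paper's intended argument for (4) and (5).

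There is, however, a genuine gap in your item (3). You write: ``By Lemma \ref{L1}, a supported vertex \(v\) cannot be the neighbor of a pendant vertex, so \(v\) is not pendant.'' The first clause is indeed the contrapositive of Lemma \ref{L1}, but the inference to ``\(v\) is not pendant'' does not follow — being pendant and being the neighbor of a pendant vertex are different properties — and the intermediate claim is simply false: in the path \(P_3\) with vertices \(1\sim 2\sim 3\), the null space is spanned by \((1,0,-1)^t\), so vertices \(1\) and \(3\) are supported \emph{and} pendant. Hence you cannot conclude \(\deg_T(v)\ge 2\) this way. The conclusion \(v(S)\ge 3\) is still true (for \(v(T)\ge 2\); the single-vertex tree is a degenerate exception the paper ignores), but the pendant case needs a separate argument: if \(v\in\Supp{}{T}\cap V(S)\) is pendant with unique neighbor \(u\), then \(u\in\Core{T}\) and, taking \(z\in\N{T}\) with \(\Supp{T}{z}=\Supp{}{T}\) via Lemma \ref{SS2}, the condition \(\omega_z(u)=0\) together with \(z_v\neq 0\) forces a second neighbor \(w\neq v\) of \(u\) with \(z_w\neq 0\), i.e.\ \(w\in\Supp{}{T}\); since \(v,u,w\) all lie in \(N[\Supp{}{T}]\) and are joined by edges of the induced subgraph, all three belong to \(V(S)\). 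With that repair the proof is complete and follows the same route the paper intends.
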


\begin{proof}
	1, 2 and 3 are direct. For 3, note that \(S\) is a connected component of \(T\left\langle N\left[ \Supp{}{T} \right] \right\rangle\), therefore \(N[\Supp{}{T}]\cap V(S)=V(S)\). Let \(v \in N[\Supp{}{T}\cap V(S)] \). If \(v \in \Supp{}{T}\), then \(u \in N[\Supp{}{T}]\cap V(S)\). If \(v \notin \Supp{}{T}\), then must exist \(u \in \Supp{}{T}\) such that \(u \sim v\). As \(u,v\) are in the same connected component of \(T\left\langle N\left[ \Supp{}{T} \right] \right\rangle\), and \(S\) is the connected component of  \(T\left\langle N\left[ \Supp{}{T} \right] \right\rangle\) where \(v\) is, we conclude that \(u \in V(S)\). Hence \(u \in N[\Supp{}{T}]\cap V(S)\). Thus \(N[\Supp{}{T}\cap V(S)] \subset N[\Supp{}{T}]\cap V(S)\). Similar arguments prove the rest of the statements. 
\end{proof}

For any tree \(T\) there is a set of edges in \(E(T)\) that are not edges neither of any tree in \(\mathcal{F}_{S}(T)\), nor of any tree in \(\mathcal{F}_{N}(T)\). We denoted them by \(\ConnE{T}\):
\[
\ConnE{T}:=E(T) \setminus \left(E(\mathcal{F}_{S}(T)) \cup E(\mathcal{F}_{N}(T))  \right) 
\]
Note that for all \(e \in \ConnE{T}\), we have that \(e \cap V(\Core{T}) \neq \emptyset \) and \(e \cap V(\mathcal{F}_{N}(T)) \neq \emptyset \).

 Given \(S \in \mathcal{F}_{S}(T) \) and \(N \in \mathcal{F}_{N}(T) \) we say that \(S\) and \(N\) are \textbf{adjacent parts}, denoted by \(S \sim N\), if there exists \(e \in \ConnE{T}\) such that  \(e \cap V(S) \neq \emptyset \) and  \(e \cap V(N) \neq \emptyset \).
The set of all the vertices of \(T\) incident to a connection edge is denoted by \(V(\ConnE{T})\).

For example, consider the tree in Figure \ref{fig_el}, its S-set is
\[
\mathcal{F}_{S}(T)=\{T \langle \{1,2,3\}\rangle, T \langle \{4,5,6,7,8\}\rangle, T \langle \{9,10,11,12\}\rangle\}=\{S_{1}, S_{2},S_{3}\}
\]
and its N-Set is
\[
\mathcal{F}_{N}(T)=\{T \langle \{13,14\}\rangle, T \langle \{15,16,17,18 \}\rangle\}=\{N_{1}, N_{2}\}
\]
The connection edges of \(T\) are
\[
\ConnE{T}= \{\{1,13\},\{4,14\},\{9,13\},\{9,16\}\}
\]
and \(V(\ConnE{T})=\{1,4,9,13,14,16\}\). The core of \(T\) is \(\Core{T}=\{1,4,5,9\}\). Note that \(\Core{S_{1}}=\{1\}\), \(\Core{S_{2}}=\{4,5\}\), and \(\Core{S_{3}}=\{9\}\). The support of \(T\) is \(\Supp{}{T}=\{2,3,6,7,8,10,11,12\}\). Note that \(\Supp{}{S_{1}}=\{2,3\}\), \(\Supp{}{S_{2}}=\{6,7,8\}\), and \(\Supp{}{S_{3}}=\{10,11,12\}\).

\begin{figure}[h]
	\centering
	\begin{tikzpicture}[thick,scale=0.2]%
	
	\draw 
	(0,0) node[fill={rgb:black,1;white,3}]{2}
	(5,0) node[fill={rgb:black,1;white,6}]{1} -- (0,0)
	(10,0) node{13} -- (5,0)
	(15,0) node{14} -- (10,0)
	(20,0) node[fill={rgb:black,1;white,6}]{4} -- (15,0)
	(5,5) node[fill={rgb:black,1;white,3}]{3} -- (5,0)
	(15,-5) node{16} -- (10,-5)
	(20,-5) node{15} -- (15,-5)
	(25,0) node[fill={rgb:black,1;white,3}]{7} -- (20,0)
	(25,-5) node[fill={rgb:black,1;white,6}]{5} -- (25,0)
	(10,-5) node[fill={rgb:black,1;white,6}]{9} -- (10,0)
	(5,-5) node[fill={rgb:black,1;white,3}]{10} -- (10,-5)
	(10,-10) node[fill={rgb:black,1;white,3}]{12} -- (10,-5)
	(15,-15) node{18} -- (15,-10)
	(15,-10) node{17} -- (15,-5)
	(25,-10) node[fill={rgb:black,1;white,3}]{8} -- (25,-5)
	(20,5) node[fill={rgb:black,1;white,3}]{6} -- (20,0)
	(10-3.5355,-8.5355) node[fill={rgb:black,1;white,3}]{11} -- (10,-5);

	\draw [densely dashdotted,very thick] (0,-2) -- (4.8,-2);
	\draw [densely dashdotted,very thick] (5,-2) arc (-90:0:2);
	\draw [densely dashdotted,very thick] (0,2) arc (90:270:2);
	\draw [densely dashdotted,very thick] (7,5) -- (7,0);
	\draw [densely dashdotted,very thick] (3,4.9) -- (3,4.3);
	\draw [densely dashdotted,very thick] (0.3,2) -- (1,2);
	\draw [densely dashdotted,very thick] (1,2) arc (-90:0:2);
	\draw [densely dashdotted,very thick] (3,5) arc (180:0:2);
	
	\draw [densely dashdotted,very thick] (27,0) -- (27,-10);
	\draw [densely dashdotted,very thick] (27,-10) arc (0:-180:2);
	\draw [densely dashdotted,very thick] (27,0) arc (0:90:2);
	\draw [densely dashdotted,very thick] (23,-10) -- (23,-4);
	\draw [densely dashdotted, very thick] (27,0) -- (27,-10);
	\draw [densely dashdotted, very thick] (22,5) arc (0:180:2);
	\draw [densely dashdotted, very thick] (20,-2) arc (-90:-180:2);
	\draw [densely dashdotted, very thick] (23,-4) arc (0:90:2);
	\draw [densely dashdotted, very thick] (24,2) arc (-90:-180:2);
	\draw [densely dashdotted, very thick] (18,0) -- (18,5);
	\draw [densely dashdotted, very thick] (20,-2) -- (21,-2);
	\draw [densely dashdotted, very thick] (25,2) -- (24,2);
	\draw [densely dashdotted, very thick] (22,5) -- (22,4);
	
	\draw [densely dashdotted, very thick] (10,-3) -- (5,-3);
	\draw [densely dashdotted, very thick] (12,-5) -- (12,-10);
	\draw [densely dashdotted, very thick] (12,-5) arc (0:90:2);
	\draw [densely dashdotted, very thick] (5,-3) arc (90:270:2);
	\draw [densely dashdotted, very thick] (12,-10) arc (0:-180:2);
	\draw [densely dashdotted, very thick] (5,-7) arc (135:315:2);
	
	\draw (0,5) node[white]{\textcolor{black}{$S_1$}};
	\draw (25,5) node[white]{\textcolor{black}{$S_2$}};
	\draw (2.5,-10) node[white]{\textcolor{black}{$S_3$}};
	
	\draw [dotted] (10,-2) -- (15,-2);
	\draw [dotted] (10,2) -- (15,2);
	\draw [dotted] (15,2) arc (90:-90:2);
	\draw [dotted] (10,-2) arc (-90:-270:2);
	
	\draw [dotted] (15,-3) -- (20,-3);
	\draw [dotted] (17,-9) -- (17,-15);
	\draw [dotted] (13,-15) -- (13,-5);
	\draw [dotted] (15,-3) arc (90:180:2);
	\draw [dotted] (20,-3) arc (90:-90:2);
	\draw [dotted] (17,-15) arc (0:-180:2);
	\draw [dotted] (19,-7) arc (90:180:2);
	\draw [dotted] (20,-7) -- (19,-7);

	\draw (12.5,4) node[white]{\textcolor{black}{$N_1$}};
	\draw (19,-12.5) node[white]{\textcolor{black}{$N_2$}};

	\end{tikzpicture}
	\caption{Null decomposition of a tree \(T\)}
	\label{fig_el}
\end{figure}
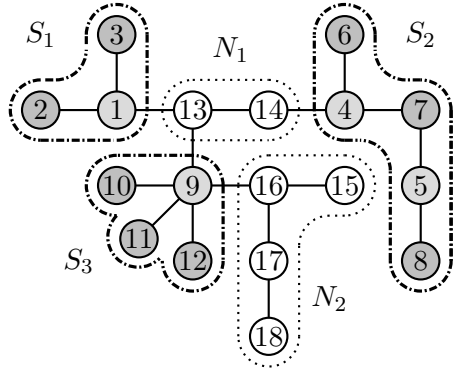

Given a graph \(G\), let \(x\) be a vector of \(\mathbb{R}^{G}\). Let \(H\) be a subgraph of \(G\). Then \(\down{x}{G}{H}\) is the vector that we obtain when we restrict \(x\) to the coordinates (vertices) associated to \(H\). For example, consider the tree \(T\) in Figure ~\ref*{fig_el}. Let \(
x^{t}=(2,4,\dots,36) \in \mathbb{R}^{T}\). Then  
\[
(\down{x}{T}{$S_{2}$})^{t} = (8,10,12,14,16)
\]
For any \(n\)-graph \(G\) and any \(m\)-induced-sugbraph \(H \leq G\), given \(x \in \mathbb{R}^{H}\), whose coordinates are indexed by \(V(H)\), with  \(\up{x}{G}{H}\) we write the lift of \(x\) to a vector of \(\mathbb{R}^{G}\) in the following way
\begin{itemize}
	\item For any \(u \in V(G) \setminus V(H)\), \((\up{x}{G}{H})_{u}=0\).
	\item For any \(u \in V(H)\), \((\up{x}{G}{H})_{u}=x_{u}\).
\end{itemize}
Consider the tree \(T\) and the subtree \(S_{3}\), see Figure ~\ref{fig_el}. Let \( x^{t}=(1,2,3,4) \in \mathbb{R}^{S_{3}} \). Then   
\[
(\up{x}{T}{$S_{3}$})^{t} = (0,0,0,0,0,0,0,0,0,1,2,3,4,0,0,0,0,0,0) \in \mathbb{R}^{T}
\]

\begin{theorem} \label{T_S_forest}
	Let \(T\) be a tree. The S-set \(\mathcal{F}_{S}(T)\) of \(T\) is or an empty set,  or a set of S-trees.
\end{theorem}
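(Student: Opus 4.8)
The plan is to prove that each connected component $S$ of $T\langle N[\Supp{}{T}]\rangle$ is an S-tree, the case $\mathcal{F}_{S}(T)=\emptyset$ being trivial. So fix such an $S$; the whole argument reduces to establishing the inclusion $\Supp{}{T}\cap V(S)\subseteq\Supp{}{S}$. Indeed, granting it: by part~(2) of the lemma following Definition~\ref{CoreTreeGeneral} the vertices of $\Supp{}{T}\cap V(S)$ have the same neighbourhood in $S$ as in $T$, so part~(4) of that lemma can be read as $N_{S}[\Supp{}{T}\cap V(S)]=V(S)$; combining this with the inclusion and the monotonicity of closed neighbourhoods gives
\[
V(S)=N_{S}[\Supp{}{T}\cap V(S)]\subseteq N_{S}[\Supp{}{S}]\subseteq V(S),
\]
hence $N[\Supp{}{S}]=V(S)$, i.e. $S$ is an S-tree.

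To prove the inclusion, apply Lemma~\ref{SS2} to the subspace $W=\N{T}$, taking the whole set $\Supp{}{T}$ as the chosen subset of its support: this produces a single $0$-eigenvector $x$ of $T$ with $\Supp{}{T}\subseteq\Supp{T}{x}$, and the reverse containment is automatic, so $\Supp{T}{x}=\Supp{}{T}$. Let $y:=\down{x}{T}{S}$ be the restriction of $x$ to $V(S)$. The key step is to check that $y\in\N{S}$. Fix $u\in V(S)$; since $S$ is an induced subgraph, $N_{S}(u)=N_{T}(u)\cap V(S)$, and therefore
\[
(A(S)y)_{u}=\sum_{w\in N_{S}(u)}x_{w}=\Big(\sum_{w\in N_{T}(u)}x_{w}\Big)-\sum_{w\in N_{T}(u)\setminus V(S)}x_{w}=-\sum_{w\in N_{T}(u)\setminus V(S)}x_{w},
\]
because $\sum_{w\in N_{T}(u)}x_{w}=(A(T)x)_{u}=0$. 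Now $V(S)\subseteq N[\Supp{}{T}]=\Supp{}{T}\cup\Core{T}$, so either $u\in\Supp{}{T}$ or $u\in\Core{T}$. In the first case part~(2) of the cited lemma gives $N_{T}(u)=N_{S}(u)\subseteq V(S)$, so the last sum is empty. In the second case, any $w\in N_{T}(u)\setminus V(S)$ satisfies $w\notin N[\Supp{}{T}]$ — otherwise $\{u,w\}$ would be an edge of $T\langle N[\Supp{}{T}]\rangle$ joining $u$ to a vertex outside its own connected component $S$ — hence $w\notin\Supp{}{T}=\Supp{T}{x}$ and $x_{w}=0$, so again the sum vanishes. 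Thus $(A(S)y)_{u}=0$ for every $u\in V(S)$, i.e. $y\in\N{S}$. Finally, for any $v\in\Supp{}{T}\cap V(S)$ we have $y_{v}=x_{v}\neq 0$, so $v\in\Supp{S}{y}\subseteq\Supp{}{S}$; this is the desired inclusion, and the proof is complete.

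I expect the only real difficulty to be the verification $y\in\N{S}$, and inside it the case $u\in\Core{T}$: one must observe that a neighbour of a core vertex of $S$ that leaves $S$ cannot belong to $N[\Supp{}{T}]$, so it carries a zero coordinate of the full-support eigenvector $x$ and may be dropped from the eigenvector equation at $u$ without changing anything. Everything else is routine bookkeeping with the partition $V(T)=\Supp{}{T}\cup\Core{T}\cup V(\mathcal{F}_{N}(T))$ from part~(1) of the cited lemma, together with Lemma~\ref{Neighboors of supported}, which ensures that $N[\Supp{}{T}]=\Supp{}{T}\cup\Core{T}$ is a disjoint union.
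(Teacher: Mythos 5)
Your proposal is correct and follows essentially the same route as the paper: restrict a null vector of \(T\) to the component \(S\), verify it lies in \(\N{S}\) because the coordinates of \(x\) vanish outside \(\Supp{}{T}\), deduce \(\Supp{}{T}\cap V(S)\subseteq\Supp{}{S}\), and conclude via \(V(S)=N[\Supp{}{T}\cap V(S)]\subseteq N[\Supp{}{S}]\subseteq V(S)\). The only cosmetic difference is that you work with a single full-support null vector from Lemma~\ref{SS2} and spell out the case analysis for the eigenvector equation at core vertices, which the paper's claim handles for arbitrary \(x\in\N{T}\) in one computation.
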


\begin{proof}
	Assume \(\mathcal{F}_{M}\neq \emptyset\).  For any \(x \in \N{T}\), if \(v \in \Core{T} \cup V(\mathcal{F}_{N}(T)) \), then \(x_{v}=0\). 
	
	Claim: If \(x \in \N{T}\), then \(A(S) \, \down{x}{T}{S}=0\) for all \(S \in \mathcal{F}_{S}(T)\).
	
	Proof of the claim: Let \(S \in \mathcal{F}_{S}(T)\), and \(v \in V(S)\), as \(\left(A(T)\right))_{vz}=0\) for all \(z \in V(\mathcal{F}_{S}(T)) \setminus V(S)\), and \(x_{z}=0\) for all \(z \in V(\mathcal{F}_{N}(T)\) 
	\begin{align*}
	\left((A(S) \, \down{x}{T}{S} \right)_{v} & = \sum_{\substack{w \sim v \\ w \in V(S)}} \left( \down{x}{T}{S} \right)_{w} \\
		{} & = \sum_{\substack{w \sim v\\ w \in V(S)}} \left( x \right)_{w} + \sum_{\substack{z \sim v \\ z \in V(T)\setminus V(S)}} \left( A(T) \right)_{vz} \, x_{z}\\
		{} & = \left( A(T) \,x \right)\\
		{} & = \theta
	\end{align*}
	Therefore for all \(S \in \mathcal{F}_{S}(T)\), if \(x \in \N{T}\), then \(\down{x}{T}{S} \in  \N{S}\). Hence \(\Supp{}{T} \cap V(S) \subset \Supp{}{S}\). Therefore \(V(S)= N[\Supp{}{T}\cap V(S)] \subset N\left[ \Supp{}{S}\right]\subset V(S) \). Thus \(S\) is a S-tree.
\end{proof}

\begin{corollary}
Let \(T\) be a tree, and  \(S \in \mathcal{F}_{S}(T)\). Then 
\begin{enumerate}
	\item \(\Supp{}{S}=\Supp{}{T} \cap V(S)\).
	\item \(\Supp{}{T}=\bigcup_{S \in \mathcal{F}_{S}(T)} \Supp{}{S}\).
	\item \(\Core{S}=\Core{T} \cap V(S)\).
	\item \(\Core{T}= \bigcup_{S \in \mathcal{F}_{S}(T)} \Core{S}\).
	\item If \(x \in \N{T}\), then \(\down{x}{T}{S} \in \N{S}\).
	\item If \(x \in \N{S}\), then \(\up{x}{T}{S} \in \N{T}\).
	\end{enumerate}
\end{corollary}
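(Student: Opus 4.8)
The plan is to prove items 5 and 6 first, since items 1--4 are bookkeeping consequences of them together with the earlier lemma collecting observations about the null decomposition. Item 5 is already established inside the proof of Theorem~\ref{T_S_forest}: there we showed $A(S)\,\down{x}{T}{S}=\theta$ for every $x\in\N{T}$ and every $S\in\mathcal{F}_{S}(T)$, so $\down{x}{T}{S}\in\N{S}$; I would simply recall this.

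The real work is item 6. Given $x\in\N{S}$, set $y=\up{x}{T}{S}$ and verify $A(T)y=\theta$ coordinatewise. Two structural facts drive this. First, any edge of $T$ with one endpoint in $V(S)$ and the other outside $V(S)$ has its outside endpoint in $V(\mathcal{F}_{N}(T))$: since $S$ is a whole connected component of $T\langle N[\Supp{}{T}]\rangle$, a neighbour of a vertex of $S$ that lay in $N[\Supp{}{T}]$ would itself lie in $S$. Second, $x$ vanishes off $\Supp{}{S}$, because $\Supp{S}{\N{S}}=\Supp{}{S}$; in particular $x$ vanishes on $\Core{S}=V(S)\setminus\Supp{}{S}$, and by the earlier lemma a vertex of $\Supp{}{T}$ has all its $T$-neighbours inside its own component $S$. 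Now fix $u\in V(T)$. If $u\in V(S)$, then $(A(T)y)_{u}=\sum_{w\sim u,\,w\in V(S)}x_{w}=(A(S)x)_{u}=0$, the neighbours outside $V(S)$ contributing $0$ because $y$ vanishes there. If $u\notin V(S)$, every neighbour $w$ of $u$ lying in $V(S)$ must be a core vertex of $S$ (a supported vertex has all neighbours in $S$), so $y_{w}=x_{w}=0$, while neighbours outside $V(S)$ again give $y=0$; hence $(A(T)y)_{u}=0$. Thus $y\in\N{T}$. This verification is the only place any care is needed, and is the main (mild) obstacle.

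With 5 and 6 in hand the rest is routine. For item 1, the inclusion $\Supp{}{T}\cap V(S)\subseteq\Supp{}{S}$ follows by restricting a $0$-eigenvector of $T$ witnessing a coordinate (item 5), and $\Supp{}{S}\subseteq\Supp{}{T}\cap V(S)$ follows by lifting a $0$-eigenvector of $S$ (item 6), whose lift keeps the same non-zero coordinate. For item 2, using item 1, $\bigcup_{S\in\mathcal{F}_{S}(T)}\Supp{}{S}=\Supp{}{T}\cap\bigcup_{S}V(S)=\Supp{}{T}\cap N[\Supp{}{T}]=\Supp{}{T}$. For item 3, since $\Supp{}{S}=\Supp{}{T}\cap V(S)$ and each such supported vertex has $N_{S}=N_{T}$, the earlier lemma gives $\Core{S}=N_{T}(\Supp{}{T}\cap V(S))=N(\Supp{}{T})\cap V(S)=\Core{T}\cap V(S)$. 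For item 4, exactly as in item 2, $\bigcup_{S}\Core{S}=\Core{T}\cap N[\Supp{}{T}]=\Core{T}$, because $\Core{T}=N(\Supp{}{T})\subseteq N[\Supp{}{T}]$.
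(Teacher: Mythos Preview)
Your overall architecture is close to the paper's, and items 2--5 are handled correctly. The trouble is a small circularity hiding in your proof of item~6, which you then use to derive item~1.

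In the case $u\notin V(S)$ you need $x_{w}=0$ for every boundary vertex $w\in V(S)$ having a $T$-neighbour outside $V(S)$; equivalently you need $w\notin\Supp{}{S}$. Your parenthetical justification ``a supported vertex has all neighbours in $S$'' is the earlier lemma, but that lemma is about vertices of $\Supp{}{T}$, not of $\Supp{}{S}$. From it you only conclude $w\notin\Supp{}{T}$. Passing from $w\notin\Supp{}{T}$ to $w\notin\Supp{}{S}$ is exactly the inclusion $\Supp{}{S}\subseteq\Supp{}{T}\cap V(S)$, i.e.\ the half of item~1 that you afterwards deduce \emph{from} item~6. So item~6 as you have written it already uses (half of) item~1.

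The paper breaks the loop by proving item~1 first, via a short contradiction argument that does not touch item~6: if $v\in\Supp{}{S}\setminus\Supp{}{T}$, then $v\in V(S)\subseteq N[\Supp{}{T}]$ forces a neighbour $u\in\Supp{}{T}\cap V(S)\subseteq\Supp{}{S}$ (the last inclusion coming from the proof of Theorem~\ref{T_S_forest}, i.e.\ item~5), and two adjacent vertices of $\Supp{}{S}$ contradict Lemma~\ref{Neighboors of supported}. With item~1 in hand, the boundary vertex $w$ lies outside $\Supp{}{T}$ and hence outside $\Supp{}{S}$, and your coordinate check for item~6 goes through verbatim. If you prefer to keep your order (6 before 1), you can insert the same independence step directly into the $u\notin V(S)$ case: since $w\in V(S)\setminus\Supp{}{T}$, it has a neighbour $w'\in\Supp{}{T}\cap V(S)\subseteq\Supp{}{S}$, and independence of $\Supp{}{S}$ gives $w\notin\Supp{}{S}$, hence $x_{w}=0$.
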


\begin{proof}
	1. Note that from the proof of Theorem \ref{T_S_forest}, we know that \(\Supp{}{T} \cap V(S) \subset \Supp{}{S} \). Assume that exists \(v \in \Supp{}{S} \setminus \Supp{}{T}\). As \(v \in  \Supp{}{S}\subset V(S)\) must exist  \(u \in \Supp{}{T}\) such that \(u \sim v\). As \(u,v\) are in the same connected component of \(T\left\langle N\left[ \Supp{}{T} \right] \right\rangle\),  \(u,v \in V(S)\). This implies that \(u \in \Supp{}{S}\), thus we arrive at a contradiction, because \(\Supp{}{S}\) is an independent set of \(S\).
	
	2. Just note that
	\begin{align*}
	\bigcup_{S \in \mathcal{F}_{S}(T)} \Supp{}{S} & = \bigcup_{S \in \mathcal{F}_{S}(T)} \left( \Supp{}{T} \cap V(S) \right)\\
	{} & = \Supp{}{T} \cap \bigcup_{S \in \mathcal{F}_{S}(T)} V(S)\\
	{} & = \Supp{}{T}
	\end{align*}
	
	3. \(\Core{S}=N(\Supp{}{S})=N(\Supp{}{T}\cap V(T))=N(\Supp{}{T}) \cap V(T)= \Core{T} \cap V(S)\).
	
	4. Just note that 
	\begin{align*}
	\bigcup_{S \in \mathcal{F}_{S}(T)} \Core{T} & = \bigcup_{S \in \mathcal{F}_{S}(T)} \left( \Core{T}\cap V(S) \right)\\
	{} & = \Core{T} \cap \bigcup_{S \in \mathcal{F}_{S}(T)} V(S)\\
	{} & = \Core{T}
	\end{align*}
	
	5. See proof of Theorem \ref{T_S_forest}.
	
	6. Let \(v \in V(T)\)
	\begin{align*}
	\left( A(T) \, \up{x}{T}{S} \right)_{v} & = \sum_{\substack{u \sim v \\ u \in V(T)}} \left( \up{x}{T}{S} \right)_{u}\\
	{} & = \sum_{\substack{u \sim v \\ u \in V(S)}} \left( x \right)_{u}\\
	{}& = \left\lbrace
	\begin{array}{ll}
	0 & \text{if } v \in V(S),\\
	0 & \text{if } v \notin V(S), \text{ and } N_{T}(v) \cap V(S)= \emptyset,\\
	x_{w} & \text{if } v \notin V(S), \text{ and } N_{T}(v) \cap V(S)= \{w\}.
	\end{array}
	\right.
	\end{align*}
If \(w \in V(S)\) and \(N_{S}(w) \neq N_{T}(w)\), then \(w \notin \Supp{}{S}\). Hence \(x_{w}=0\). Therefore \(A(T) \, \up{x}{T}{S} = \theta\).
\end{proof}

\begin{corollary}
	Let \(T\) be a tree. Then
	\begin{enumerate}
		\item Let \(S_{1},S_{2} \in \mathcal{F}_{S}(T)\), \(x_{1} \in \N{S_{1}}\), and \(x_{2} \in \N{S_{2}}\), then \[(\up{x_{1}}{T}{S{\small 1}})^{t} \up{x_{2}}{T}{S{\small 2}}=0\]
		\item For all \(S \in \mathcal{F}_{S}(T)\), and for all \(x(S) \in \N{S}\)
		\[
		\sum_{S \in \mathcal{F}_{S}(T)} \up{x(S)}{T}{S}
		\]
		is a vector of \(\N{T}\).
	\end{enumerate}
\end{corollary}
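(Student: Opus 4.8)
The two statements are essentially immediate consequences of the previous corollary together with the structure of the null decomposition, so the plan is to reduce each part to facts already established.

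For part 1, the plan is to observe that the supports of two distinct trees $S_1, S_2 \in \mathcal{F}_S(T)$ are disjoint. Indeed, $S_1$ and $S_2$ are distinct connected components of the forest $T\langle N[\Supp{}{T}]\rangle$, so $V(S_1) \cap V(S_2) = \emptyset$. By the definition of the lift operator $\up{\cdot}{T}{S}$, the vector $\up{x_1}{T}{S_1}$ is supported inside $V(S_1)$ and $\up{x_2}{T}{S_2}$ is supported inside $V(S_2)$. Since these vertex sets are disjoint, for every vertex $v \in V(T)$ at least one of the two coordinates $(\up{x_1}{T}{S_1})_v$ and $(\up{x_2}{T}{S_2})_v$ vanishes, hence every term in the inner product $\sum_{v \in V(T)} (\up{x_1}{T}{S_1})_v (\up{x_2}{T}{S_2})_v$ is zero. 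This gives the claimed orthogonality.

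For part 2, the plan is to invoke item 6 of the previous corollary, which states that $\up{x(S)}{T}{S} \in \N{T}$ for each $S \in \mathcal{F}_S(T)$ and each $x(S) \in \N{S}$. Since $\N{T}$ is a linear subspace of $\mathbb{R}^T$, it is closed under finite sums, so $\sum_{S \in \mathcal{F}_S(T)} \up{x(S)}{T}{S} \in \N{T}$ as well. There is nothing more to do.

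I do not expect a genuine obstacle here; both parts are formal. The only point requiring a small amount of care is making explicit the disjointness of the vertex sets $V(S)$ for $S \in \mathcal{F}_S(T)$ — but this is built into the definition of $\mathcal{F}_S(T)$ as the set of connected components of a forest, and was already used implicitly in the proof of part 2 of the preceding corollary. If one wants to be thorough, one can also remark that the sum in part 2 is, coordinatewise, simply the vector $x$ of $\mathbb{R}^T$ that restricts to $x(S)$ on each $V(S)$ and is zero on $\Core{T} \cup V(\mathcal{F}_N(T))$; this makes the membership in $\N{T}$ transparent directly from the Claim in the proof of Theorem \ref{T_S_forest}.
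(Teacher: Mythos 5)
Your argument is correct and is exactly the intended one (the paper states this corollary without proof, as an immediate consequence of the preceding corollary): disjointness of the vertex sets of distinct components of $T\langle N[\Supp{}{T}]\rangle$ gives part 1, and item 6 of the previous corollary plus linearity of $\N{T}$ gives part 2. The only point worth noting is that part 1 is true only for $S_1 \neq S_2$, which you correctly (and implicitly the paper, too) take as the intended reading.
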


Let \(H \leq F \leq G\) , two subgraph of a graph \(G\). Let \(U \subset \mathbb{R}^{H}\). Then \(\up{U}{F}{H}:=\{\up{x}{F}{H} \; :\; x \in U\}\).  

\begin{proposition} \label{C_null}
	Let \(T\) be a tree. Then 
	\[
	\N{T}=\bigoplus_{S \in \mathcal{F}_{S}(T)} \up{\N{S}}{T}{S}
	\]
	Therefore \(\nulidad{T}=\sum_{S \in \mathcal{F}_{S}(T)} \nulidad{S}\).
\end{proposition}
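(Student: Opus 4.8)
The plan is to establish the two inclusions $\bigoplus_{S}\up{\N{S}}{T}{S}\subseteq\N{T}$ and $\N{T}\subseteq\bigoplus_{S}\up{\N{S}}{T}{S}$ separately, after first noting why the sum on the right-hand side is genuinely direct, and then to read off the nullity formula by counting dimensions. Directness is immediate: for each $S\in\mathcal{F}_{S}(T)$ the subspace $\up{\N{S}}{T}{S}$ is supported inside $V(S)$, and the vertex sets $V(S)$, $S\in\mathcal{F}_{S}(T)$, are pairwise disjoint, so a relation $\sum_{S}\up{x(S)}{T}{S}=\theta$ forces each summand to vanish on its own block $V(S)$, whence $\up{x(S)}{T}{S}=\theta$ for every $S$ (one may also invoke the corollaries following Theorem~\ref{T_S_forest}, which exhibit these subspaces as mutually orthogonal). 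The inclusion $\bigoplus_{S}\up{\N{S}}{T}{S}\subseteq\N{T}$ is then exactly the assertion already recorded in those corollaries: every sum $\sum_{S}\up{x(S)}{T}{S}$ with $x(S)\in\N{S}$ lies in $\N{T}$.

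For the reverse inclusion, fix $x\in\N{T}$. By Lemma~\ref{Neighboors of supported} the set $\Core{T}=N(\Supp{}{T})$ is disjoint from $\Supp{}{T}=\Supp{T}{\N{T}}$, and $V(\mathcal{F}_{N}(T))=V(T)\setminus N[\Supp{}{T}]$ is disjoint from $\Supp{}{T}$ as well, so $x_{v}=0$ whenever $v\in\Core{T}\cup V(\mathcal{F}_{N}(T))$; that is, $\Supp{T}{x}\subseteq\bigcup_{S\in\mathcal{F}_{S}(T)}V(S)$. The corollaries following Theorem~\ref{T_S_forest} also give $\down{x}{T}{$S$}\in\N{S}$ for every $S$. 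Since $x$ vanishes outside $\bigcup_{S}V(S)$ and agrees with $\up{\down{x}{T}{$S$}}{T}{$S$}$ on each block $V(S)$, we obtain $x=\sum_{S\in\mathcal{F}_{S}(T)}\up{\down{x}{T}{$S$}}{T}{$S$}\in\bigoplus_{S}\up{\N{S}}{T}{S}$, so the two spaces coincide (in the degenerate case $\mathcal{F}_{S}(T)=\emptyset$ both sides equal $\{\theta\}$, i.e.\ $T$ is an N-tree).

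Finally, the lift $x\mapsto\up{x}{T}{$S$}$ is linear and injective, since it merely pads $x$ with zeros, so $\dim\up{\N{S}}{T}{S}=\nulidad{S}$; combining this with the directness of the sum yields $\nulidad{T}=\dim\N{T}=\sum_{S\in\mathcal{F}_{S}(T)}\dim\up{\N{S}}{T}{S}=\sum_{S\in\mathcal{F}_{S}(T)}\nulidad{S}$. The only step that needs any care is the observation in the reverse inclusion that a $0$-eigenvector of $T$ is supported entirely within the S-forest; but this is immediate from Lemma~\ref{Neighboors of supported} together with the definition of $\mathcal{F}_{N}(T)$, so no genuine obstacle remains — the proposition is essentially a bookkeeping consequence of the corollaries already in hand.
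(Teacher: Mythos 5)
Your proof is correct and follows essentially the same route as the paper: the paper simply asserts that lifting a basis of each $\N{S}$ yields a basis of $\N{T}$, relying implicitly on the same corollaries (restriction and lift of null vectors, disjointness of the blocks $V(S)$) that you invoke explicitly. Your version merely fills in the two inclusions, the directness, and the dimension count that the paper leaves unstated.
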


\begin{proof}
	Note that we can obtain a base of \(\N{T}\) by taken a base for each \(S \in \mathcal{F}_{S}(T)\), i.e., let 
	\[
	\mathcal{B}_{S}:=\lbrace b(S)_{1},\dots,b(S)_{n(S)} \rbrace
	\]
	be a base of \(\N{S}\), for each \(S \in \mathcal{F}_{S}(T)\). We take the lift of each one:
	\[
	\up{\mathcal{B}_{S}}{T}{S}:=\lbrace \up{b(S)_{1}}{T}{S},\dots,\up{b(S)_{n(S)}}{T}{S} \rbrace\
	\]
	Then
	\[
	\mathcal{B}(T):= \bigcup_{S \in \mathcal{F}_{S}(T)}	\up{\mathcal{B}_{S}}{T}{S}
	\]
	is a base of \(\N{T}\).
\end{proof}

Thus, for the tree \(T\) in Figure \ref{fig_el}, we have that \(\N{S_{1}}=\langle\{(0,-1,1)^t\}\rangle\), \(\N{S_{2}}=\langle\{(0,0,1,-1,1)^t\}\rangle\), and \(\N{S_{3}}=\langle \{(0,1,-1,0)^t,(0,1,0,-1)^t\}\rangle\). Therefore \(\N{T}\) is spanning by \(\{e_{2}-e_{3}, e_{6}-e_{7}+e_{8}, e_{10}-e_{11}, e_{10}-e_{12}\}\).

Next lemma builds, from a maximum matching in \(T\), a maximum matching which does not use any connection edges.

\begin{lemma}\label{L_MNCE}
	Let \(T\) be a tree. There exists a maximum matching \(M\) in \(T\) such that \(M \cap \ConnE{T} = \emptyset\).
\end{lemma}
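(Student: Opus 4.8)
The goal is to start with an arbitrary maximum matching $M_0$ of $T$ and, by a sequence of alternating-path swaps, push it off the connection edges while keeping it maximum. The key structural fact to exploit is that, by Corollary~\ref{coroRefi1} and Theorem~\ref{MatchingMaximumStrees}, inside every S-tree $S\in\mathcal{F}_S(T)$ the matching $M_0$ can at best match each vertex of $\Core{S}$ to a vertex of $\Supp{}{S}$, and $|\Core{S}|<|\Supp{}{S}|$; moreover, by Lemma~\ref{SuppNSaturated}, any single prescribed supported vertex of $S$ can be freed by an $M_0$-alternating path lying entirely inside $S$. So supported vertices are exactly the vertices that are ``negotiable''. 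Since every connection edge $e\in\ConnE{T}$ has one endpoint in $\Core{T}$ (inside some $S$) and the other in $V(\mathcal{F}_N(T))$, the idea is: whenever $e=\{c,w\}\in M_0\cap\ConnE{T}$ with $c\in\Core{S}$, I want to re-match $c$ to a supported neighbor of $S$ instead, using the slack in Hall's condition (Lemma~\ref{cardinalities in S-trees}).

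\textbf{Main steps.} First I would fix $M_0\in\mathcal{M}(T)$ minimizing $|M_0\cap\ConnE{T}|$ and argue this quantity is $0$. Suppose not; pick $e=\{c,w\}\in M_0\cap\ConnE{T}$, with $c\in\Core{S}$, $w\in V(N)$ for some $N\in\mathcal{F}_N(T)$. Consider the set $C$ of all core-vertices of $S$ that are ``reachable from $c$'' along $M_0$-alternating paths inside $S$ that start with a non-matching edge into $\Supp{}{S}$ — more simply, look at $S$ with $M_0$ restricted to $S\setminus\{w\}$'s influence, i.e.\ $M_S := M_0\cap E(S)$, which matches $c$ to nothing inside $S$ but matches some subset $U\subseteq\Core{S}\setminus\{c\}$ into $\Supp{}{S}$. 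Since $|N_S(U\cup\{c\})\cap\Supp{}{S}|\ge |U\cup\{c\}| = |U|+1 > |U| = |V(M_S)\cap\Supp{}{S}|$ by Lemma~\ref{cardinalities in S-trees}, there is an $M_S$-augmenting path $P$ in $S$ from $c$ to an unsaturated supported vertex $v$. Then $M_0':=M_0\vartriangle(P\cup\{e\})$ — equivalently, first delete $e$, then augment along $P$ — is a matching of the same cardinality as $M_0$ (we removed one edge and added one, net zero, and $P$ re-saturates $c$ while freeing $v$) with one fewer connection edge: the deleted edge $e$ was the only connection edge touched, since $P\subset E(S)$. This contradicts minimality of $|M_0\cap\ConnE{T}|$.

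\textbf{Where the care is needed.} The delicate point is bookkeeping the cardinality: deleting $e$ loses one matched edge and unsaturates both $c$ and $w$; the augmenting path $P$ inside $S$ adds exactly one net edge and re-saturates $c$ (it leaves $w$ unsaturated, which is fine — we are not claiming $M_0'$ saturates everything $M_0$ did, only that $|M_0'|=|M_0|$, hence $M_0'$ is still maximum). I must make sure $P$ genuinely exists and stays in $S$: this is where I invoke that $N_S(v)=N_T(v)$ for $v\in\Supp{}{T}$ (part 2 of the lemma after Definition~\ref{CoreTreeGeneral}), so alternating paths started at supported vertices cannot leak out of $S$ through a connection edge — only core-vertices are incident to connection edges. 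A clean way to package the augmenting-path construction is to reuse the Desaturater-style argument of Lemma~\ref{SuppNSaturated} almost verbatim, applied to $S$ with the matching $M_S\cup\{\text{something}\}$; alternatively one can induct on $|\ConnE{T}|$, handling one connection edge at a time and noting the swap does not create new ones. The only genuine obstacle is verifying that a single swap strictly decreases $|M_0\cap\ConnE{T}|$ without increasing it elsewhere, which follows because $P$ uses only edges of $S$ and $\ConnE{T}\cap E(S)=\emptyset$.
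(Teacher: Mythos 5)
Your proposal follows essentially the same route as the paper: the paper's proof is precisely an alternating-path exchange confined to the S-part meeting the offending connection edge (its ``S-Matching algorithm''), iterated until no connection edge remains. Your extremal formulation (choose \(M_0\in\mathcal{M}(T)\) minimizing \(|M_0\cap\ConnE{T}|\)) is a cleaner way to organize that iteration, and invoking Lemma~\ref{cardinalities in S-trees} to guarantee that the exchange path terminates at an unsaturated vertex makes explicit a point the paper leaves implicit.

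One step is under-justified, though what it asserts is true. From
\[
|N_S(U\cup\{c\})\cap\Supp{}{S}|\;>\;|U|\;=\;|V(M_S)\cap\Supp{}{S}|
\]
you conclude that there is an \(M_S\)-augmenting path \emph{starting at \(c\)} and ending at an unsaturated \emph{supported} vertex. The displayed count only shows that some supported neighbour of \(U\cup\{c\}\) is \(M_S\)-unsaturated; it does not show that such a vertex is reachable from \(c\) by an alternating path, which is exactly the classical subtlety in Hall/K\"onig arguments. Moreover, \(S\) need not be bipartite with parts \(\Core{S}\) and \(\Supp{}{S}\): core--core edges do occur (the double star on six vertices is an S-tree whose two centres are adjacent core vertices), so the identity \(|U|=|V(M_S)\cap\Supp{}{S}|\) itself needs an argument, and parity alone does not force the far endpoint of the path to be supported rather than a core vertex saturated by another connection edge -- in which case your swap would fail. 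The standard repair: let \(X\) be the set of core vertices of \(S\) reachable from \(c\) by \(M_S\)-alternating paths whose non-matching steps go from \(\Core{S}\) into \(\Supp{}{S}\). If every vertex of \(N_S(X)\cap\Supp{}{S}\) were \(M_S\)-saturated, its matching partner would lie in \(X\setminus\{c\}\) (the partner of a supported vertex is a core vertex, since \(\Supp{}{S}\) is independent and all \(T\)-edges at a supported vertex lie in \(E(S)\)), giving \(|N_S(X)\cap\Supp{}{S}|\le|X|-1\) and contradicting Lemma~\ref{cardinalities in S-trees}. This yields an augmenting path from \(c\) ending at an \(M_S\)-unsaturated supported vertex, which is then genuinely \(M_0\)-unsaturated because supported vertices have no neighbours outside \(S\); with that in place, the rest of your swap and cardinality bookkeeping goes through.
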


\begin{proof}
	Let \(M\) be a maximum matching of \(T\) such that \(M \cap \ConnE{T} \neq \emptyset \). The following algorithm give us a new maximum matching \(\tilde{M}\) such that \(|\tilde{M} \cap \ConnE{T}| < |M \cap \ConnE{T}|\). 
	\begin{algorithm} \label{S_matchin_algorithm}
		S-Matching algorithm. 
		\begin{enumerate}
			\item INPUT: \(M \in \mathcal{M}(T)\) , an edge \(e \in M \cap \ConnE{T}\), and \(S \in \mathcal{F}_{S}(T)\) such that \(e \cap V(S) \neq \emptyset \)
			\item \(i=0\).
			\item \(u_{i} \in e \cap V(S) \).
			\item WHILE \(N_{S}(u_{i})\setminus V(M) = \emptyset\):
				\begin{enumerate}
					\item \(i=i+1\).
					\item CHOSE \(v_{i} \in N_{S}(u_{i})\).
					\item CHOSE \(u_{i} \in V(S)\) such that \( \{v_{i},u_{i}\} \in M\).
				\end{enumerate}
			\item \(i=i+1\).
			\item CHOSE \(v_{i} \in (N_{S}(u_{i-1})\setminus V(M)\).
			\item OUTPUT
			\[
			\tilde{M}:=M \setminus \left(e \bigcup_{k=1}^{i-1} \{v_{i},u_{i}\} \right) \cup \bigcup_{k=1}^{i}\{u_{i-1},v_{i}\}
			\]
		\end{enumerate}
	\end{algorithm}
	Repeated applications of the S-Matching algorithm prove the lemma.
\end{proof}

Let \(T\) be a tree. For any \(e \in \ConnE{T}\), we denote \(S_{e} \in \mathcal{F}_{S}(T)\) and \(N_{e} \in \mathcal{F}_{N}(T)\) to the parts of \(T\) connected by \(e\). Let \(u \in \Core{S_{e}}\) and \(v \in V(N_{e})\), we denoted by \(T(e,u,v)\) the tree obtained from \(T\) by replacing \(e\) for \(\lbrace u,v \rbrace\) in \(E(T)\): \(T(e,u,v)=T-e+\{u,v\}\).

\begin{lemma} \label{stability}
	Let \(T\) be a tree \(T\), and \(e \in \ConnE{T}\). For each \(u \in \Core{S_{e}}\) and for each \(v \in V(N_{e})\) we have that
	\begin{enumerate}
		\item If \(M \in \mathcal{M}(T)\) and \(M \cap \ConnE{T}=\emptyset\), then \(M \in M(T(e,u,v))\).
		\item \(\N{T}=\N{T(e,u,v)}\).
		\item \(\nulidad{T}=\nulidad{T(e,u,v)}\).
		\item \(\rank{T}=\rank{T(e,u,v)}\).
	\end{enumerate}
\end{lemma}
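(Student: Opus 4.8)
The plan is to reduce all four items to two elementary facts together with the classical identity \(\nulidad{G}=v(G)-2\nu(G)\) for trees \(G\) (recalled in the introduction). Write \(e=\{a,b\}\) with \(a\in\Core{S_e}\) and \(b\in V(N_e)\). I would first note that \(T(e,u,v)\) is genuinely a tree: deleting the bridge \(e\) splits \(T\) into the component containing \(V(S_e)\) and the component containing \(V(N_e)\) (both \(S_e\) and \(N_e\) are connected, and \(e\) lies in the edge set of neither), so reinserting \(\{u,v\}\) with \(u\in V(S_e)\) and \(v\in V(N_e)\) restores connectedness with \(v(T)-1\) edges, hence a tree.

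\emph{First fact: \(\N{T}\subseteq\N{T(e,u,v)}\).} The matrix \(A(T(e,u,v))-A(T)\) has nonzero entries only in the rows and columns indexed by \(a,b,u,v\): it removes the symmetric \(1\)'s at positions \((a,b),(b,a)\) and inserts \(1\)'s at \((u,v),(v,u)\) (if \(u=a\) or \(v=b\) some of these coincide, which only helps; if \(\{u,v\}=\{a,b\}\) then \(T(e,u,v)=T\) and there is nothing to prove). Now take \(x\in\N{T}\). The vertices \(a,u\) lie in \(\Core{T}\) and \(b,v\) lie in \(V(\mathcal{F}_{N}(T))\), and by the partition of \(V(T)\) into \(\Supp{}{T}\), \(\Core{T}\), \(V(\mathcal{F}_{N}(T))\) recorded earlier (together with Lemma \ref{Neighboors of supported}) none of \(a,b,u,v\) belongs to \(\Supp{}{T}=\Supp{T}{\N{T}}\); hence \(x_a=x_b=x_u=x_v=0\), so \(\bigl(A(T(e,u,v))-A(T)\bigr)x=\theta\) and therefore \(A(T(e,u,v))x=A(T)x=\theta\).

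\emph{Second fact: \(\nu(T(e,u,v))\geq\nu(T)\).} By Lemma \ref{L_MNCE} there is a maximum matching \(M\) of \(T\) with \(M\cap\ConnE{T}=\emptyset\); since \(e\in\ConnE{T}\) we get \(e\notin M\), so \(M\subseteq E(T)\setminus\{e\}\subseteq E(T(e,u,v))\), and \(M\) is a matching of \(T(e,u,v)\) of size \(\nu(T)\).

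Finally I would assemble the pieces. The first fact gives \(\nulidad{T}\leq\nulidad{T(e,u,v)}\). The second, with \(v(T)=v(T(e,u,v))\) and \(\nulidad{G}=v(G)-2\nu(G)\), gives \(\nulidad{T(e,u,v)}=v(T(e,u,v))-2\nu(T(e,u,v))\leq v(T)-2\nu(T)=\nulidad{T}\). Hence \(\nulidad{T}=\nulidad{T(e,u,v)}\) (item 3); combined with \(\N{T}\subseteq\N{T(e,u,v)}\) and equality of finite dimensions, \(\N{T}=\N{T(e,u,v)}\) (item 2); then \(\rank{T}=v(T)-\nulidad{T}=v(T(e,u,v))-\nulidad{T(e,u,v)}=\rank{T(e,u,v)}\) (item 4); and \(\nu(T)=\rank{T}/2=\rank{T(e,u,v)}/2=\nu(T(e,u,v))\), so every \(M\in\mathcal{M}(T)\) with \(M\cap\ConnE{T}=\emptyset\)—each being a matching of \(T(e,u,v)\) of size \(\nu(T)=\nu(T(e,u,v))\)—is a maximum matching of \(T(e,u,v)\) (item 1). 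I do not expect a serious obstacle; the only point requiring care is the harmless degenerate bookkeeping (\(u=a\), \(v=b\), or \(T(e,u,v)=T\)) in the matrix-difference computation, and the order of the argument is dictated by the need to know \(T(e,u,v)\) is a tree before invoking the identity \(\nulidad{G}=v(G)-2\nu(G)\).
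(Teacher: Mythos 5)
Your proof is correct and follows essentially the same route as the paper's: a maximum matching avoiding \(\ConnE{T}\) (Lemma \ref{L_MNCE}) gives \(\nu(T(e,u,v))\geq\nu(T)\), the containment \(\N{T}\subseteq\N{T(e,u,v)}\) gives the reverse inequality on nullities, and rank--nullity squeezes everything to equality. The only difference is that you explicitly justify the containment \(\N{T}\subseteq\N{T(e,u,v)}\) via the support of the difference matrix, a detail the paper asserts without proof.
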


\begin{proof}
	Any maximum matching in \(T\) whose intersection with \(\ConnE{T}\) is empty is a matching of \(T(e,u,v)\). This matching exists by Lemma \ref{L_MNCE}.  From \cite{bevis1995ranks} we know that the rank of the adjacency matrix of a tree equals two times the its matching number. Then \(\rank{T(e,u,v)} = 2\nu(T(e,u,v)) \geq 2|M|=\rank{T}\). On the other hand, \(\N{T} \subset \N{T(e,u,v)}\), thus \(\nulidad{T(e,u,v)} \geq \nulidad{T}\). Therefore
	\begin{align*}
	v(T) & = v(T(e,u,v) \\
	{} & = \rank{T(e,u,v)}+\nulidad{T(e,u,v)}\\
	{} & \geq \rank{T}+\nulidad{T}\\
	{} & = v(T)
	\end{align*}
	Thus \(\nulidad{T}=\nulidad{T(e,u,v)}\), and \(\rank{T}=\rank{T(e,u,v)}\). Hence \(M \in M(T(e,u,v)\) and \(\N{T}=\N{T(e,u,v)}\).
\end{proof}

Let \(T\) be a tree, and \(u,v \in V(T)\), here and subsequently, \(T(u \rightarrow v)\) stands for the following subtree  of \(T\):
\[
T(u \rightarrow v) := T\left\langle \{ x \in V(T): v \in V(uP_{T}x) \} \right\rangle 
\]
See Figure \ref{FigT100}.
%
\tikzstyle{every node}=[circle, draw, fill=white!,
inner sep=0.1pt, minimum width=11pt]

\begin{figure}[h] 
	\centering
	\begin{tikzpicture}[thick,scale=0.2]%
	\draw
	(0,0) node{}
	(0,5) node[fill=gray]{$v$} -- (0,0)
	(0,10) node{} -- (0,5)
	(-5,10) node{} -- (0,10)
	(0,15) node{} -- (0,10)
	(-5,15) node{} -- (0,15)
	(0,20) node{} -- (0,15)
	(-5,20) node{} -- (0,20)
	(5,15) node{} -- (0,15)
	(10,15) node[fill=gray]{$u$} -- (5,15)
	(15,15) node{} -- (10,15)
	(20,15) node{} -- (15,15)
	(10,10) node{} -- (10,15)
	(15,10) node{} -- (15,15)
	(15,5) node{} -- (15,10);
	
	\draw [dotted] (10,17) -- (20,17);
	\draw [dotted] (8,15) -- (8,10);
	\draw [dotted] (17,11.2) -- (17,5);
	\draw [dotted] (20,13) -- (18.8,13);
	\draw [dotted] (10,8) -- (10.8,8);
	\draw [dotted] (13,5) -- (13,5.8);

	\draw [dotted] (20,13) arc (-90:90:2);
	\draw [dotted] (10,17) arc (90:180:2);
	\draw [dotted] (13,5) arc (180:360:2);
	\draw [dotted] (17,11) arc (180:90:2);
	\draw [dotted] (8,10) arc (180:270:2);
	\draw [dotted] (13,6) arc (0:90:2);
	
	\draw (15,22) node[white]{\textcolor{black}{$T(v \to u)$}};
	
	\end{tikzpicture}
	\caption{\(T\) and \(T(v \to u)\)}
\end{figure}\label{FigT100}

\begin{definition}
	Let \(S\) be an S-tree, and \(c \in \Core{S}\). Given \(v\), a vertex, \(v \notin V(S)\), with \(S +_{s} \{c,v\})\) we denote the tree with vertex set \(V(S +_{s} \{c,v\})= V(S) \cup \{v\}\), and edge set \(E(S +_{s} \{c,v\})=E(S) \cup \{c,v\}\). 
\end{definition}

Usually, we do not care about to which core-vertex the new vertex \(v\) is added, in this cases we just write \(S+_{s}v\). See Figure 6.

\tikzstyle{every node}=[circle, draw, fill=white!,
inner sep=0.1pt, minimum width=11pt]

\begin{figure}
	\centering
	\begin{tikzpicture}[thick,scale=0.2]%
	
	\draw[dotted,very thick]
	(10,5) -- (10,0);
	
	\draw
	(0,0) node[fill=gray]{}
	(10,5) node{v}
	(0,5) node{} -- (0,0)
	(0,-5) node{} -- (0,0)
	(5,0) node{} -- (0,0)
	(10,0) node[fill=gray]{} -- (0,0)
	(15,0) node{} -- (0,0);
	\end{tikzpicture}
	\caption{$S+_S v$}
\end{figure}

\begin{lemma}
	Let \(S\) be an S-tree, and \(v\) a vertex such that \(v \notin V(S)\). Then \(S+_{s}v\) is an S-tree.
\end{lemma}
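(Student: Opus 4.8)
The goal is to show that if $S$ is an S-tree and $v \notin V(S)$ is a new vertex attached to a core-vertex $c \in \Core{S}$, then $S' := S +_s v$ is again an S-tree, i.e. $N\left[\Supp{}{S'}\right] = V(S')$. The natural strategy is to compare $\N{S'}$ with $\N{S}$ and deduce that the support is unchanged, after which the S-tree condition for $S'$ follows almost immediately from the one for $S$.

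First I would observe that $S' = S +_s \{c,v\}$ is obtained from $S$ by attaching a pendant vertex $v$ whose unique neighbor is $c$. By Lemma \ref{L1}, $c \notin \Supp{}{S'}$ (its neighbor $v$ is pendant in $S'$). The key claim is that $\Supp{}{S'} = \Supp{}{S}$. For the inclusion $\Supp{}{S} \subseteq \Supp{}{S'}$: given $x \in \N{S}$, I would check that the lift $\up{x}{S'}{S}$ lies in $\N{S'}$ — the equation at $v$ reads $(A(S')\,\up{x}{S'}{S})_v = x_c$, which is $0$ since $c \in \Core{S}$ means $c \notin \Supp{}{S}$, and the equation at every vertex of $S$ is unchanged except at $c$, where the extra term is $x_v = 0$. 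So $\N{S}$ lifts into $\N{S'}$, giving $\Supp{}{S} \subseteq \Supp{}{S'}$, and moreover $v \notin \Supp{}{S'}$ is not yet established this way. For the reverse inclusion: take $y \in \N{S'}$; its restriction to $S$ satisfies the S-equations at every vertex except possibly $c$, and the $c$-equation gives $y_v + \sum_{w \sim c, w \in V(S)} y_w = 0$; combined with the $v$-equation $y_c = 0$, one shows $\down{y}{S'}{S} \in \N{S}$ (the $c$-row of $A(S)$ applied to the restriction equals $-y_v$, but $y_v = 0 \cdot y_c = 0$ by the $v$-equation). Hence $y_v = 0$, so $v \notin \Supp{}{S'}$, and the restriction being in $\N{S}$ forces $\Supp{}{S'} \subseteq \Supp{}{S}$.

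Once $\Supp{}{S'} = \Supp{}{S}$ is in hand, I would finish as follows: $N_{S'}\left[\Supp{}{S'}\right] = N_{S'}\left[\Supp{}{S}\right] \supseteq N_S\left[\Supp{}{S}\right] = V(S)$ since $S$ is an S-tree, and $v \in N_{S'}(c)$ with $c \in \Core{S} = N_S(\Supp{}{S})$, so there is a supported vertex $s$ of $S$ adjacent to $c$; in $S'$ we have $v \sim c \sim s$, but this only gives $v$ at distance $2$ from a supported vertex, not $v \in N_{S'}\left[\Supp{}{S}\right]$. So I would instead argue directly: by Lemma \ref{lemma-number of pendent neighboors}, $c \in \Core{S}$ has at least two supported neighbors in $S$, all of which remain supported in $S'$ and adjacent to $c$; hence $c \in N_{S'}\left[\Supp{}{S'}\right]$, and $v \in N_{S'}(c)$ so $v \in N_{S'}\left[\Supp{}{S'}\right]$ as well — wait, $v \in N_{S'}\left[\Supp{}{S'}\right]$ needs $v$ adjacent to a supported vertex or $v$ supported; $v$ is adjacent only to $c$, which is not supported, and $v$ is not supported. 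This is the obstacle.

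The resolution — and the point I expect to be the crux — is that the definition of S-tree requires $N\left[\Supp{}{S'}\right] = V(S')$, which would fail at $v$ if $v \notin \Supp{}{S'}$ and $v$'s only neighbor $c \notin \Supp{}{S'}$. So in fact the claim $\Supp{}{S'} = \Supp{}{S}$ must be \emph{wrong}, or rather one should expect $v \in \Supp{}{S'}$ and possibly $c$ leaving the support behavior unchanged. I would reexamine: attaching a pendant to $c$ where $c$ already has $\geq 2$ supported neighbors — the nullity stays the same (this is a standard pendant-attachment fact: attaching a pendant to a vertex that is a neighbor of a support vertex preserves nullity, cf. the structure in \cite{nylen1998null}), but the support can grow to include $v$ while $c$ stays out. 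Concretely, since $c$ has supported neighbors $s_1, s_2$ in $S$ with a null vector $x$ where $x_{s_1}, x_{s_2} \neq 0$, I can build a null vector of $S'$ supported on $v$: set the vector to agree with a scalar multiple of $x$ on $\{s_1, s_2\}$ adjusted so that $\omega(c) = 0$ still holds after adding value at $v$ — more precisely, the vector $e_v$ weighted appropriately together with a correction on the supported neighbors of $c$ gives an element of $\N{S'}$ nonzero at $v$. So $v \in \Supp{}{S'}$, and then $V(S') = V(S) \cup \{v\} = N_S[\Supp{}{S}] \cup \{v\} \subseteq N_{S'}[\Supp{}{S'}] \cup \{v\} = N_{S'}[\Supp{}{S'}]$ since $v \in \Supp{}{S'}$. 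I would organize the write-up around this corrected claim: (1) $\Supp{}{S} \subseteq \Supp{}{S'}$ via lifting null vectors; (2) $v \in \Supp{}{S'}$ via an explicit null vector of $S'$ using Lemma \ref{lemma-number of pendent neighboors}; (3) conclude $N_{S'}[\Supp{}{S'}] \supseteq V(S) \cup \{v\} = V(S')$. The main obstacle is step (2) — constructing the explicit $0$-eigenvector of $S'$ nonzero at $v$, which hinges essentially on $c$ having at least two supported neighbors in $S$.
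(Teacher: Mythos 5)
You correctly identify the crux of the lemma: the naive guess $\Supp{}{S+_{s}v}=\Supp{}{S}$ cannot hold, because then the pendant vertex $v$, whose only neighbour is the core vertex $c\notin\Supp{}{S+_{s}v}$ (Lemma \ref{L1}), would violate $N[\Supp{}{S+_{s}v}]=V(S+_{s}v)$; so one must exhibit a null vector of $S+_{s}v$ that is nonzero at $v$. Your step (1) (lifting $\N{S}$ into $\N{S+_{s}v}$) is correct, and your step (3) is the right way to conclude. The gap is in step (2), which is exactly the step the whole proof rests on. The construction you sketch --- put a weight at $v$ and ``correct'' only on the supported neighbours $s_{1},s_{2}$ of $c$ --- does not yield a null vector in general: if a supported neighbour $s$ of $c$ is not a pendant vertex, then changing the value at $s$ destroys the zero-sum condition at every \emph{other} core vertex adjacent to $s$. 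A concrete failure is the path $P_{9}$ with $c$ an interior core vertex (say vertex $4$): both of its supported neighbours are interior, and a short computation shows that no nonzero null vector of $P_{9}+_{s}v$ is supported inside $\{v\}\cup N(c)$. The modification must be propagated along an entire branch, and that is precisely what the paper's proof does: it takes a full-support null vector $x$ of $S$, picks one supported neighbour $u$ of $c$, rescales the whole branch $S(c\rightarrow u)$ by a factor $\alpha\in(0,1)$, and assigns the displaced mass $(1-\alpha)x_{u}$ to $v$. This is consistent because the branch meets the rest of $S$ only through the edge $\{u,c\}$ and $x_{c}=0$, so every vertex equation except the one at $c$ is simply scaled, and the one at $c$ is repaired by the new term at $v$.

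Two further slips are worth flagging. First, what the construction actually needs is \emph{one} supported neighbour of $c$ with a nonzero entry, which is immediate from $c\in\Core{S}=N(\Supp{}{S})$ together with Lemma \ref{SS2}; Lemma \ref{lemma-number of pendent neighboors} (at least two supported neighbours) is not the relevant input. Second, your parenthetical claim that the nullity is preserved is false: since every maximum matching of $S$ saturates $c$ (Theorem \ref{MatchingMaximumStrees}), one has $\nu(S+_{s}v)=\nu(S)$ while the order grows by one, so $\nulidad{S+_{s}v}=\nulidad{S}+1$ --- consistent with the fact that the new null vector constructed above is genuinely new, not a lift from $\N{S}$.
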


\begin{proof}
	Clearly \(\up{\N{S}}{$S+_{s}v$}{S} \subset \N{S+_{s}v}\). Let \(x \in \mathbb{R}^{S}\) such that \(\Supp{S}{x}=\Supp{}{S}\). Let \(u \in N_{S}(c)\), where \(c \in \Core{S}\) is the vertex of \(S\) that form the new edge \(\{c,v\}\), and \(\alpha \in (0,1)\). Define
	\(y \in \mathbb{R}^{S+_{s}v} \) as
	\[
	y_{w}:= \left\lbrace 
	\begin{array}{rl}
	\alpha\,x_{w} & \text{if } w \in V(S(c\rightarrow u)),\\
	(1-\alpha)x_{u} & \text{if } w=v,\\
	x_{w} & \text{otherwise.}
	\end{array}
	\right.
	\]
	Clearly \(A(S+_{s}v)\, y=\theta\). Hence \(S+_{s}v\) is an S-tree.
\end{proof}

\begin{theorem}
Let \(T\) be a tree, the N-set \(\mathcal{F}_{N}(T)\) of \(T\) is or an empty set, or a set of N-trees.
\end{theorem}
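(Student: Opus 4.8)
The plan is to avoid working directly with null vectors on the parts and instead deduce everything from a single dimension count. Since $T$ is a tree, every induced subgraph of $T$ is a forest, so each member of $\mathcal{F}_{N}(T)$ is automatically a tree; the only thing left to prove is that each $N \in \mathcal{F}_{N}(T)$ is non-singular, i.e. that $\nulidad{N}=0$. If $\mathcal{F}_{N}(T)=\emptyset$ there is nothing to do, so assume it is non-empty.

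First I would record three additive identities ranging over all the parts of the null decomposition, that is, over $\mathcal{F}_{S}(T)\cup\mathcal{F}_{N}(T)$. (i) Vertices: because $\Supp{}{T}$, $\Core{T}$ and $V(\mathcal{F}_{N}(T))$ partition $V(T)$ and the members of $\mathcal{F}_{S}(T)$ (resp.\ of $\mathcal{F}_{N}(T)$) are pairwise disjoint connected components, one gets $v(T)=\sum_{S\in\mathcal{F}_{S}(T)}v(S)+\sum_{N\in\mathcal{F}_{N}(T)}v(N)$. (ii) Matchings: by Lemma~\ref{L_MNCE} there is a maximum matching $M$ of $T$ with $M\cap\ConnE{T}=\emptyset$; since $E(T)$ is the disjoint union of $E(\mathcal{F}_{S}(T))$, $E(\mathcal{F}_{N}(T))$ and $\ConnE{T}$, such an $M$ splits as a disjoint union of matchings of the individual parts, giving $\nu(T)\le\sum_{S}\nu(S)+\sum_{N}\nu(N)$; conversely, the union of maximum matchings of all the (vertex-disjoint) parts is a matching of $T$, which yields the reverse inequality, hence $\nu(T)=\sum_{S}\nu(S)+\sum_{N}\nu(N)$. (iii) Nullity of trees: for every tree $G$ one has $\nulidad{G}=v(G)-2\nu(G)$, as recalled in the introduction (equivalently $\rank{G}=2\nu(G)$, see \cite{bevis1995ranks}).

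Combining (i), (ii) and (iii), applied both to $T$ and to each part, I would get
\[
\nulidad{T}=v(T)-2\nu(T)=\sum_{S\in\mathcal{F}_{S}(T)}\nulidad{S}+\sum_{N\in\mathcal{F}_{N}(T)}\nulidad{N}.
\]
On the other hand, Proposition~\ref{C_null} gives $\nulidad{T}=\sum_{S\in\mathcal{F}_{S}(T)}\nulidad{S}$. Subtracting, $\sum_{N\in\mathcal{F}_{N}(T)}\nulidad{N}=0$, and since each summand is a non-negative integer, $\nulidad{N}=0$ for every $N\in\mathcal{F}_{N}(T)$. Thus each such $N$ is a non-singular tree, i.e.\ an N-tree, which is exactly the assertion.

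The step I expect to need the most care is identity (ii): one has to check that the maximum matching produced by Lemma~\ref{L_MNCE} really lies inside $E(\mathcal{F}_{S}(T))\cup E(\mathcal{F}_{N}(T))$ and therefore decomposes exhaustively into matchings of the parts (this uses that, by definition, every edge of $T$ is either an edge of some part or a connection edge). The remaining ingredients are routine: the partition of $V(T)$ is immediate from the definitions, and the nullity formula for trees is classical. A more hands-on alternative --- lifting a hypothetical nonzero null vector of some $N$ up to a null vector of $T$ --- runs into the connection edges incident to core vertices and seems messier, so I would stick with the counting argument above.
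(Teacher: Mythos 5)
Your argument is correct, and it is genuinely different from the one in the paper. The paper argues by contradiction and by explicit construction of null vectors: assuming some \(N\in\mathcal{F}_{N}(T)\) is singular, it picks an S-part \(S_{0}\) of \(N\), takes a null vector of \(N\) supported on \(\Supp{}{S_{0}}\), and patches it together with null vectors of the adjacent S-trees \(S_{i}+_{s}u_{i}\) (scaled by suitable factors \(\varrho_{i}\)) to manufacture a vector of \(\N{T}\) that is nonzero on \(V(N)\), contradicting the definition of \(\mathcal{F}_{N}(T)\). You instead run a global dimension count: the vertex partition, the splitting \(\nu(T)=\sum_{S}\nu(S)+\sum_{N}\nu(N)\) obtained from Lemma~\ref{L_MNCE} (whose two inequalities you justify correctly, and which is available at this point without circularity --- note you rightly avoid Corollary~\ref{CoroMatching}, which comes after and depends on this theorem), and the classical identity \(\nulidad{G}=v(G)-2\nu(G)\) for trees combine to give \(\nulidad{T}=\sum_{S}\nulidad{S}+\sum_{N}\nulidad{N}\); comparing with Proposition~\ref{C_null} forces every \(\nulidad{N}\) to vanish. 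What your route buys is brevity and robustness: it sidesteps the delicate eigenvector-patching construction entirely and reduces the theorem to bookkeeping over already-established facts. What it gives up is self-containedness and structural information: it leans on the external identity \(\nulidad{T}=v(T)-2\nu(T)\) (equivalently \(\rank{T}=2\nu(T)\), which the paper only cites), whereas the paper's proof stays inside the null-space formalism and shows concretely how a hypothetical null vector on an \(N\)-part would propagate into \(\N{T}\).
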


\begin{proof}
	Assume there exists \(N \in \mathcal{F}_{N}(T)\)  such that \(N\) is a singular tree. Then \(\mathcal{F}_{S}(N) \neq \emptyset \).  Let \(S_{0} \in \mathcal{F}_{S}(N)\), an S-tree of \(N\). Note that \(\Supp{}{S_{0}} \cap \Supp{}{T}=\emptyset\). Let \(x_{0} \in \N{N}\) such that \(\Supp{}{S_{0}}=\Supp{N}{x_{0}} \). If \(N_{T}(S_{0}) \subset N\), then \(\up{x_{0}}{T}{N} \in \N{T}\). This implies that \(\Supp{}{S_{0}}  \subset \cap \Supp{}{T}\), which is a contradiction. Let \(S_{1}, \dots, S_{k}\) the S-trees of \(T\) adjacent to \(S_{0}\) via supported vertices of \(S_{0}\). For each \(i \in [k]\), let \(u_{i} \in \Supp{}{S_{0}}\) the vertex of \(S_{0}\) neighbor of some vertex of \(S_{i}\). Let \(x_{i} \in \N{S_{i}+_{s}u_{i}}\) such that \(\Supp{}{S_{i}+_{s}u_{i}}=\Supp{S_{i}+_{s}u_{i}}{x_{i}}\). Let \(\varrho_{i}=\frac{(x_{0})_{u_{i}}}{(x_{i})_{u_{i}}}\). Then	
	\[
	\up{x_{0}}{T}{S{\small 0}}+\sum_{i=1}^{k} \varrho_{i} \;\up{(\down{x_{i}}{$S_{i}+_{s}u_{i}$}{$S_{i}$})}{T}{$S_{i}+_{s}u_{i}$}
	\]
	is a vector of \(\N{T}\), hence \(S_{0} \subset \Supp{}{T}\), which is a contradiction.	
\end{proof}

This theorem can also be proved by using Lemma \ref{stability}. Here a sketch of the proof. As in the proof, let \(S_{0}\) be an S-part of some tree in \(\mathcal{F}_{N}(T)\). Let \(u \in \Core{S_{0}}\).  We change all the connection edges from \(S_{0}\) to some S-part of \(T\), by replacing its \(S_{0}\) vertices by \(v\). By Lemma \ref{stability}, this new tree has the same null space as \(T\), but in this new tree \(\Supp{}{S_{0}}\) is a subset of its support, which is a contradiction.

Now we have three corollaries, the first one tells that every connection edge is never in a maximum matching. 
\begin{corollary}\label{CoroMatching}
	Let  \(T\) be a tree, and a maximum matching \(M \in \mathcal{M}(T)\). Then \(M \cap \ConnE{T} = \emptyset \).
\end{corollary}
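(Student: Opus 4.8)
The plan is to fix an arbitrary maximum matching $M$ of $T$ and split its edges into three groups --- those lying inside some part of $\mathcal{F}_{S}(T)$, those lying inside some part of $\mathcal{F}_{N}(T)$, and the connection edges $M\cap\ConnE{T}$ --- and then to bound, part by part, how many edges of $M$ can lie inside a given part in terms of the matching number of that part and the number of connection edges of $M$ meeting it. Summing these local bounds will force $M\cap\ConnE{T}=\emptyset$. For a part $P$ (an S-tree in $\mathcal{F}_{S}(T)$ or an N-tree in $\mathcal{F}_{N}(T)$) write $m_{P}=|M\cap E(P)|$, and let $c_{P}$ be the number of edges of $M\cap\ConnE{T}$ incident to $P$. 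Recall that $E(T)$ is the disjoint union of $E(\mathcal{F}_{S}(T))$, $E(\mathcal{F}_{N}(T))$ and $\ConnE{T}$, and that the parts partition $V(T)$.

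The two local inequalities are the heart of the argument. For an S-part $S$: by Lemma \ref{Neighboors of supported}, $\Supp{}{S}$ is independent in $S$, so every edge of $S$ --- in particular every edge of $M\cap E(S)$ --- has an endpoint in $\Core{S}$; moreover, since every $e\in\ConnE{T}$ has one endpoint in $\Core{T}=\bigcup_{S'}\Core{S'}$ and one in $V(\mathcal{F}_{N}(T))$, every connection edge meeting $S$ does so at a vertex of $\Core{S}=\Core{T}\cap V(S)$. Being pairwise non-adjacent in $M$, the $m_{S}$ edges of $M\cap E(S)$ and the $c_{S}$ connection edges meeting $S$ together use $m_{S}+c_{S}$ distinct vertices of $\Core{S}$, so $m_{S}+c_{S}\le\core{S}$; as $v(S)\ge 3$ (parts of $\mathcal{F}_{S}(T)$ have at least three vertices), the earlier corollary gives $\core{S}=\nu(S)$, hence $m_{S}+c_{S}\le\nu(S)$. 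For an N-part $N$: since $\mathcal{F}_{N}(T)$ is a family of N-trees, $N$ has a perfect matching and $\nu(N)=v(N)/2$; every vertex of $N$ saturated by $M$ is saturated by an edge of $M\cap E(N)$ or by a connection edge meeting $N$, and these $2m_{N}+c_{N}$ vertices are distinct, so $2m_{N}+c_{N}\le v(N)=2\nu(N)$, i.e. $m_{N}+\tfrac{1}{2}c_{N}\le\nu(N)$.

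Now I would add everything up. Each connection edge meets exactly one S-part (at a core vertex) and exactly one N-part, so $\sum_{S}c_{S}=\sum_{N}c_{N}=|M\cap\ConnE{T}|=:k$, while $\sum_{S}m_{S}+\sum_{N}m_{N}=|M|-k=\nu(T)-k$. Summing the two families of local inequalities,
\[
(\nu(T)-k)+k+\tfrac{1}{2}k\ \le\ \sum_{S\in\mathcal{F}_{S}(T)}\nu(S)+\sum_{N\in\mathcal{F}_{N}(T)}\nu(N).
\]
On the other hand, the union of maximum matchings of the pairwise vertex-disjoint parts is a matching of $T$, so the right-hand side is at most $\nu(T)$. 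Hence $\nu(T)+\tfrac{1}{2}k\le\nu(T)$, which forces $k=0$, that is, $M\cap\ConnE{T}=\emptyset$.

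I expect the only real difficulty to be bookkeeping rather than a deep idea: one must keep $m_{P}$ and $c_{P}$ referring to genuinely disjoint edge sets so that the ``$m_S+c_S$ distinct core vertices'' step and the identity $\sum_{S}c_{S}=\sum_{N}c_{N}=k$ are both legitimate, and one must make sure the structural inputs are in force --- Theorem \ref{T_S_forest} and the theorem that $\mathcal{F}_{N}(T)$ is a family of N-trees --- so that $\nu(S)=\core{S}$ (for $v(S)\ge 3$) and $\nu(N)=v(N)/2$ may be invoked. The degenerate cases where $\mathcal{F}_{S}(T)$ or $\mathcal{F}_{N}(T)$ is empty are handled automatically, since then $\ConnE{T}=\emptyset$ and the corresponding sums vanish.
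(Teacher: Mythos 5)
Your argument is correct, but it proceeds by a different mechanism than the paper's. The paper argues by contradiction via an augmenting construction: starting from a maximum matching that uses a connection edge, it applies the S-Matching algorithm (Lemma \ref{L_MNCE}) to push the matched edges off $\ConnE{T}$ and into the S-parts without losing cardinality; the displaced connection edges leave some vertices of $\mathcal{F}_{N}(T)$ unsaturated, and since each N-part has a perfect matching one can then strictly enlarge the matching inside the N-forest, contradicting maximality. You instead give a static double-counting argument: the local bounds $m_{S}+c_{S}\le\core{S}=\nu(S)$ (every edge of $M$ touching $S$ occupies its own core vertex, connection edges included, since supported vertices have all their neighbours inside their S-part) and $2m_{N}+c_{N}\le v(N)=2\nu(N)$, summed against $\sum_{S}\nu(S)+\sum_{N}\nu(N)\le\nu(T)$, yield $\nu(T)+\tfrac{1}{2}|M\cap\ConnE{T}|\le\nu(T)$. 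Both proofs rest on the same structural inputs (Theorem \ref{T_S_forest}, the N-parts being N-trees, and $\nu(S)=\core{S}$), but yours bypasses the S-Matching algorithm entirely --- which is a genuine gain, since that algorithm is only sketched in the paper --- at the cost of some bookkeeping; the paper's exchange argument, when spelled out, additionally exhibits an explicit connection-edge-free maximum matching rather than merely excluding connection edges from all of them. All the counting steps in your version check out, including the key facts that each connection edge meets exactly one S-part (at a core vertex) and exactly one N-part, and that the degenerate cases are vacuous.
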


\begin{proof}
	Let \(M \in \mathcal{M}(T)\) such that \(M \cap \ConnE{T} \neq \emptyset\). Repeated applications of the S-Matching algorithm \ref{S_matchin_algorithm} give a matching \(\hat{M} \in \mathcal{M}(T)\) such that \(\hat{M} \cap \ConnE{T} = \emptyset\) and \(e(M)=e(\hat{M})\). Note that \(\hat{M}\) do not uses all vertices in \(\mathcal{F}_{N}(T)\). Let \(M(N)\) be the perfect matching in \(N\), with \(N \in \mathcal{F}_{N}(T)\). Then
	\[
	\bar{M}=\hat{M}-(M\cap E(\mathcal{F}_{N}(T)))+\bigcup_{N \in \mathcal{F}_{N}(T)} M(N)
	\]
	is a matching of \(T\) such that \(e(M)<e(\bar{M})\), which is a contradiction.
\end{proof}

Our next result gives the matching number and the independence number of a tree in terms its core, its support, and its N-forest.

\begin{corollary}
	Let \(T\) be a tree. Then
	\begin{align*}
	\nu(T) = & \core{T}	+\frac{v(\mathcal{F}_{N}(T))}{2}\\
	\alpha(T)= & \supp{}{T}+\frac{v(\mathcal{F}_{N}(T))}{2}
	\end{align*}
\end{corollary}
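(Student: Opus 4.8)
**The plan is to combine the structural results already established about the null decomposition with the S-Matching algorithm.** The key facts I would invoke are: (i) by the previous corollary, there is a maximum matching $M$ with $M \cap \ConnE{T} = \emptyset$, so $M$ splits as a disjoint union of matchings on the S-forest and on the N-forest; (ii) for each $N \in \mathcal{F}_{N}(T)$, $N$ is an N-tree, hence has a perfect matching, so the restriction of an optimal matching to $N$ contributes exactly $\frac{v(N)}{2}$ edges; (iii) for each $S \in \mathcal{F}_{S}(T)$, an earlier corollary gives $\nu(S) = \core{S}$ (when $v(S) \geq 3$, which always holds by Lemma on $v(S)\geq 3$), and by the corollary to Theorem \ref{T_S_forest} we have $\Core{T} = \bigcup_{S} \Core{S}$ with the union disjoint (the $V(S)$ partition the relevant vertices).

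\textbf{Carrying this out for $\nu(T)$.} First I would take a maximum matching $M$ of $T$ with no connection edges, which exists by Corollary \ref{CoroMatching}. Then $E(M) \subset E(\mathcal{F}_{S}(T)) \cup E(\mathcal{F}_{N}(T))$, so $|M| = \sum_{S \in \mathcal{F}_{S}(T)} |M \cap E(S)| + \sum_{N \in \mathcal{F}_{N}(T)} |M \cap E(N)|$. Since $M$ is maximum overall, each $M \cap E(S)$ must be a maximum matching of $S$ (otherwise we could enlarge it without touching other parts, as the parts share no edges and connection edges are unused), giving $|M \cap E(S)| = \nu(S) = \core{S}$; similarly $M \cap E(N)$ is a maximum matching of the N-tree $N$, i.e.\ a perfect matching, so $|M \cap E(N)| = \frac{v(N)}{2}$. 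Summing and using $\sum_S \core{S} = \core{T}$ and $\sum_N v(N) = v(\mathcal{F}_{N}(T))$ yields $\nu(T) = \core{T} + \frac{v(\mathcal{F}_{N}(T))}{2}$. I should double-check the edge case where $\mathcal{F}_{S}(T)$ or $\mathcal{F}_{N}(T)$ is empty; then one of the sums is zero and the formula still holds.

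\textbf{Deriving $\alpha(T)$.} By the König–Egerv\'ary theorem $\alpha(T) + \tau(T) = v(T)$ and $\tau(T) = \nu(T)$ for the bipartite graph $T$, so $\alpha(T) = v(T) - \nu(T)$. Now $v(T) = \supp{}{T} + \core{T} + v(\mathcal{F}_{N}(T))$ by the partition statement in the "observations" lemma. Substituting the formula for $\nu(T)$:
\[
\alpha(T) = \supp{}{T} + \core{T} + v(\mathcal{F}_{N}(T)) - \core{T} - \tfrac{v(\mathcal{F}_{N}(T))}{2} = \supp{}{T} + \tfrac{v(\mathcal{F}_{N}(T))}{2},
\]
as claimed.

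\textbf{Expected main obstacle.} The only genuinely non-routine point is justifying that a globally maximum matching of $T$, once purged of connection edges, restricts to a \emph{maximum} matching on each individual S-part and N-part simultaneously. This is an exchange argument: since distinct parts are vertex-disjoint on the relevant vertex sets and the connection edges (the only edges joining parts) are absent from $M$, any local augmentation within one part does not interfere with the matching on the others, so a non-maximum restriction would contradict maximality of $M$. I would state this carefully, perhaps citing the S-Matching algorithm (Algorithm \ref{S_matchin_algorithm}) and Corollary \ref{CoroMatching} to handle the purging step, and the earlier corollary $\nu(S) = \core{S}$ to identify the local optimum on each S-tree.
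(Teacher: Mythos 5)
Your proposal is correct and follows essentially the same route as the paper: purge connection edges via Corollary \ref{CoroMatching}, observe that the maximum matching then restricts to a maximum matching on each S-part (where $\nu(S)=\core{S}$) and each N-part (perfect matching), sum, and obtain $\alpha(T)$ from K\"onig--Egerv\'ary. The paper's proof is just a terser statement of the same argument; your version usefully spells out the exchange argument and the disjointness of the $\Core{S}$ that the paper leaves implicit.
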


\begin{proof}
	By Corolary ~\ref{CoroMatching}, any maximum matching in \(T\) must be maximum matching in each part of the null decomposition of \(T\). Then \(\nu(T) = \core{T}	+\frac{v(\mathcal{F}_{N}(T))}{2}\). Hence, by K\"{o}nig-Ergerv\'{a}ry Theorem \(\alpha(T)= \supp{}{T}+\frac{v(\mathcal{F}_{N}(T))}{2}\).
\end{proof}

The tree \(T\) in Figure \ref{fig_el} has matching number \(\nu(T)=4 +\frac{6}{2}=7\), and independence number \(\alpha(T)=8+\frac{6}{2}=11\).

Since 1964, it is known that for trees, the characteristic and the matching polynomials are the same, see \cite{sachs1964beziehungen}. Therefore \(m(T)\), the number of maximum matching of the tree \(T\), equals the product of all nonzero eigenvalues of \(T\). The matching polynomial of a graph had been study in many other papers, see for example \cite{farrell1979introduction}, \cite{godsil1981theory}, and  \cite{godsil1995algebraic}. In 1981, Godsil proved that the characteristic and the matching polynomial are equal if and only if the graph is a forest, see \cite{godsil1981theory}. In 1995, Godsil  proved that the number of vertices missed by a maximum matching in a graph \(G\) is the multiplicity of zero as a root of its matching polynomial, see \cite{godsil1995algebraic}. This result are all direct consequences of the Sachs theorem, see Theorem  3.8, pag.31 in \cite{bapat2014graphs}. Determine the family of \(n\)-trees that maximize \(m(T)\), the number of maximum matching in a tree \(T\), is a hard problem solved in see \cite{heuberger2011number}. The third corollary is a new way to think about this interesting problem: The number of maximum matching only depends on the S-set.

\begin{corollary}
	Let \(T\) be a tree. Then
	\[
	m(T)=\prod_{S \in\mathcal{F}_{S}(T) }m(S)
	\]
\end{corollary}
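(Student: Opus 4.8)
The plan is to combine the structural results already established: every maximum matching of $T$ avoids all connection edges (Corollary~\ref{CoroMatching}), and on each $N$-tree part a maximum matching restricts to a perfect matching (since $N$-trees have perfect matchings, and a perfect matching is unique in a tree). So the only freedom in building a maximum matching of $T$ lies in how it behaves on the $S$-tree parts. First I would invoke Lemma~\ref{stability} (or apply the $S$-matching algorithm of Lemma~\ref{L_MNCE}) to reduce to the case where the connection edges have been rerouted, or simply argue directly: by Corollary~\ref{CoroMatching} any $M\in\mathcal{M}(T)$ satisfies $M\cap\ConnE{T}=\emptyset$, hence $M$ decomposes as a disjoint union $M=\bigcup_{S\in\mathcal{F}_S(T)}(M\cap E(S))\;\cup\;\bigcup_{N\in\mathcal{F}_N(T)}(M\cap E(N))$.

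Next I would show this decomposition sets up a bijection between $\mathcal{M}(T)$ and $\prod_{S\in\mathcal{F}_S(T)}\mathcal{M}(S)$ (the $N$-part contributing only the single forced factor $\prod_N M(N)$). For the forward direction: given $M\in\mathcal{M}(T)$, each $M\cap E(S)$ is a matching in $S$, and $M\cap E(N)$ must be the perfect matching $M(N)$ of $N$ — otherwise, as in the proof of Corollary~\ref{CoroMatching}, one could enlarge $M$ on $N$, contradicting maximality. Moreover $M\cap E(S)$ must itself be a \emph{maximum} matching of $S$: using $\nu(T)=\core{T}+\tfrac{1}{2}v(\mathcal{F}_N(T))=\sum_{S}\core{S}+\sum_N\tfrac{1}{2}v(N)=\sum_S\nu(S)+\sum_N\nu(N)$ (by the Corollary giving $\nu(S)=\core{S}$ and the Corollary computing $\nu(T)$), if any $M\cap E(S)$ were not maximum in $S$ we could swap it for a maximum matching of $S$ and strictly increase $|M|$. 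For the reverse direction: given a choice of $M_S\in\mathcal{M}(S)$ for each $S$, the union $\bigcup_S M_S\cup\bigcup_N M(N)$ is a matching of $T$ of size $\sum_S\nu(S)+\sum_N\nu(N)=\nu(T)$, hence maximum. These two maps are mutually inverse, so $|\mathcal{M}(T)|=\bigl(\prod_{S}|\mathcal{M}(S)|\bigr)\cdot\bigl(\prod_N 1\bigr)$, which is exactly $m(T)=\prod_{S\in\mathcal{F}_S(T)}m(S)$.

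The one subtlety — and the step I expect to require the most care — is verifying that in the reverse direction the union is genuinely a \emph{matching}, i.e.\ that no two edges coming from different parts share a vertex. This is where one uses that distinct parts of the null decomposition share no vertices, and that the only edges joining parts are the connection edges in $\ConnE{T}$, which are used by none of the $M_S$ nor any $M(N)$; so a shared vertex could only occur inside a single part, where $M_S$ (resp.\ $M(N)$) is already a matching. A mirror-image remark handles well-definedness of the forward map (that $M\cap E(S)$ really is a subset of $E(S)$ with the claimed maximality), which we get for free from $M\cap\ConnE{T}=\emptyset$. Once these bookkeeping points are in place the counting identity is immediate.
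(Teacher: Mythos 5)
Your proof is correct, and it is exactly the argument the paper leaves implicit: the paper states this corollary without proof, as a consequence of Corollary~\ref{CoroMatching} and the formula $\nu(T)=\core{T}+\tfrac{1}{2}v(\mathcal{F}_N(T))$, which is precisely the decomposition-plus-counting bijection you carry out (including the correct observation that a tree has at most one perfect matching, so each $N$-part contributes a factor of $1$). The bookkeeping points you flag are handled exactly as you say, by the vertex-disjointness of the parts and the fact that all inter-part edges lie in $\ConnE{T}$.
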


The tree \(T\) in Figure \ref{fig_el} has \(m(T)=m(S_{1})\,m(S_2)\,m(S_{3})=2\cdot3\cdot3=18\) maximum matchings. 

Use this corollary in order to characterizing the trees that maximize \(m(T)\) is a pending work.

As we pointed before, our work can be seen as a further step of the work of Nylen (\cite{nylen1998null}), and part of work of Neumaier (specifically, section 3 of \cite{neumaier1982second}); even though we were not aware of this former paper after finishing this work. The null decomposition of trees allow us to give counterexamples to Theorem 3.4 (ii) and Proposition 3.6 (ii)-(v) in \cite{neumaier1982second}. In order to do that we will write Neumaier result in our language.

In section 3 of \cite{neumaier1982second} were introduced the following notions associated to the possible zero entries of tree eigenvector. Let \(T\) be a tree, a vertex \(u \in V(T) \) is \(\lambda\)-essential if there is a \(\lambda\)-eigenvector \(x\) with \(x_{u}\neq 0\), thus \(0\)-essential  vertices are our supported vertices. The vertex \(u\) is \(\lambda\)-special if it is not essential, but neighbor of some essential vertex, and the vertex is \(\lambda\)-inessential otherwise. Thus \(0\)-special vertices are our core vertices and the \(0\)-inessential are N-vertices (vertices of the N-forest of \(T\)).

The part (ii) of Theorem 3.4 in \cite{neumaier1982second} says: Let \(T\) be a tree with \(\lambda\) an eigenvalue of \(T\) of multiplicity \(k\). If \(u\) is an inessential vertex of \(T\), then \(\lambda\) is an  eigenvalue of \(T-u\) of multiplicity \(k\). The tree in Figure ~\ref{Fig1} is a counterexample for this statement: the 0-essential vertices of \(T\) are \(\{2,3,4\}\), \(T\) has only one 0-special vertex: \(1\), and \(\{5,6\}\) are 0-inessential. The nullity of \(T\) is 2 and  the nullity of \(T-\{6\}\) is 3.

The tree in Figure \ref{Fig1} is also a counterexample to Proposition 3.6 parts (ii)-(v) in \cite{neumaier1982second}: Let \(T\) be a tree of order \(n\), then 
\begin{enumerate}
	\item[(ii)] No vertex of \(T\) is 0-inessential.
	\item[(iii)] A vertex is 0-special if and only if it is common to all maximum matching.
	\item[(iv)] An edge of \(T\) contains one or two 0-special vertices.
	\item[(v)] There are exactly \(\nu(T)\) 0-special vertices, and every edge of a maximum matching contains a unique 0-special vertex.
\end{enumerate}
All this statements are true if and only if \(T\) is an S-tree.



\section*{Acknowledgement}
	The authors are gratefully indebted to Vilmar Trevisan for their active interest in the publication of this paper. We gratefully acknowledge the many helpful suggestions of Adri\'{a}n Pastine during the preparation of the paper. Even though the text does not reflect it, we carreid on many numerical experiments on \cite{sage}. They gives us the insight for this paper. Aus dem Paradies, das SageMathcloud uns geschaffen, soll uns niemand vertreiben k\"{o}nnen.

\section*{}
Funding: This work was partially supported by the Universidad Nacional de San Luis, Grant: PROIPRO 03-2216, and Secretaria de Pol\'{\i}ticas Universitarias, Ministerio de Educaci\'{o}n, Rep\'{u}blica Argentina, Programa Redes Interuniversitarias IX, Grant: \textquotedblleft Red Argentino-Chilena de Teor\'{\i}a de N\'{u}meros, Grafos y Combinatoria\textquotedblright, RESOL-2016-1968-E-APN-SECPU-ME.

\section*{References}

\bibliographystyle{apalike}

\bibliography{TAGcitas}

\end{document}